\providecommand{\keywords}[1]
{
  \small	
  \textbf{\textit{Keywords---}} #1
}
\numberwithin{equation}{section}
\newtheorem{theorem}{Theorem}[section]
\newtheorem{lemma}[theorem]{Lemma}
\newtheorem{proposition}[theorem]{Proposition}
\theoremstyle{definition}
\newtheorem{definition}[theorem]{Definition}
\theoremstyle{remark}
\newtheorem{remark}[theorem]{Remark}
\newcommand{\ol}{\overline}
\newcommand{\wt}{\widetilde}
\newcommand{\wh}{\widehat}
\newcommand{\mc}{\mathcal}
\newcommand{\N}{\mathbb N}
\newcommand{\Z}{\mathbb Z}
\newcommand{\E}{\mathcal E}
\newcommand{\B}{\mathcal B}
\newcommand{\A}{\mathcal A}
\newcommand{\be}{\begin{equation}}
\newcommand{\ee}{\end{equation}}
\newcommand{\ba}{\begin{aligned}}
\newcommand{\ea}{\end{aligned}}
\begin{document}		

\title{IFS measures on generalized Bratteli diagrams}
\author{Sergey Bezuglyi and Palle E.T. Jorgensen}

\date{}
\maketitle

\begin{center}
    \textit{Dedicated to the memory of Robert Strichartz}
\end{center}

\begin{abstract}
The purpose of the paper is a general analysis of path space 
 measures. Our focus is a certain path space analysis 
 on generalized Bratteli diagrams. We use this in a 
systematic study of systems of self-similar measures (the 
term ``IFS measures'' is used in the paper) for both types of such 
diagrams, discrete and continuous. In special 
 cases,  such measures arise in the study of iterated function 
 systems (IFS). In the literature, similarity may be defined by, 
 e.g., systems of affine maps (Sierpinski), or systems of conformal 
 maps (Julia). We study new classes of semi-branching 
 function systems related to stationary Bratteli diagrams. The 
 latter plays a big role in our understanding 
 of new forms of harmonic analysis on fractals. The measures 
 considered here arise in classes of discrete-time, multi-level 
 dynamical systems where similarity is specified between levels. 
 These structures are made precise by prescribed systems of 
 functions which in turn serve to define self-similarity, i.e., 
the similarity of large scales, and  small scales. 
 For path space systems,
in our main result, we give a necessary and sufficient condition 
for the existence of such generalized IFS measures. For the 
corresponding semi-branching function systems, we further identify 
the measures which are also shift-invariant.

\end{abstract}

\keywords{Bratteli diagrams, invariant path space measures, 
self-similar measures, iterated and semibranching function systems, 
IFS measures,  Perron-Frobenius.\\ 

\textit{\textbf{AMS Mathematics Subject Classification}}: 28A33, 
28A80, 37B10,  46G12, 60J22}

\tableofcontents

\section{Introduction} \label{sect_intro}

We present new results in the study of self-similar measures on
path space for infinite graph models,here called generalized 
Bratteli diagrams. By this, we mean particular
graph systems $B$ with the property that the corresponding sets $V$
of vertices, and $E$ of edges, admit discrete level-structures, 
defined below. This means that $V$ is a disjoint union of the
sets $V_n$
and $E$ is a disjoint union of the sets $E_n$. We emphasize that 
the class of generalized Bratteli diagrams 
include two cases: (1) all the sets $V_n, E_n$ are countable (if all
$V_n$'s are finite, then we have a standard Bratteli diagram); 
(2) all the set $V_n, E_n$ are standard Borel spaces. In case (2),
we will say that $B$ is a measurable (or continuous) Bratteli 
diagram. 

Our present framework is motivated by, but more versatile than, 
the more familiar setting
in earlier studies of Bratteli diagrams. We use the discrete levels in order to identify a
class of self-similar path space measures, called iterated function system (IFS) measures.
The latter in turn are inspired by earlier studies 
of IFS systems arising the analysis of fractals, such as various 
Sierpinski constructs, or conformal attractors. However, by 
contrast our 
analysis centers around a new path space analysis for 
discrete-time 
random walk models in generalized Bratteli diagrams. We also 
discuss IFS measures for the case when the system of levels making 
up $B$ are instead standard measure spaces, so non-discrete.

We recall that a measure $\mu$ on a standard Borel space $X$ is 
called \textit{self-similar} with respect to an iterated function 
system $f_i : X \to X, i \in I,$ if
$$
\mu = \sum_{i \in I} p_i \mu\circ f_i^{-1},
$$
where $p = (p_i)_{i\in I}$ is a probability vector and 
$I$ is finite or countable.

A main aim of our paper is an identification of, and an analysis of,
iterated function system (IFS) measures on the path space $X_B$ 
of generalized
Bratteli diagrams. This entails two tasks, (a) an analysis of IFS 
structure of the path spaces, and (b) a study of the particular 
IFS measures on them. 

Our main results consist of finding an explicit construction of
IFS measures for both discrete and measurable generalized 
Bratteli diagrams.

In order to motivate, and to place this in context, we add the 
following three comments here: (i) Previously, IFS measures have 
been considered in special geometries of self-similar fractals (see 
e.g., \cite{Jorgensen2018(book)} and the papers cited there). These
standard self-similar fractals may be realized in finite-dimensional
Euclidean space. By contrast, there are no previous studies of 
IFS measures in path space, i.e., IFS measures realized in the class
of path space structures considered here, standard and 
generalized Bratteli 
diagrams. (ii) While there are earlier results on other, different 
but related, classes of measures on path space of generalized 
Bratteli diagrams, e.g., tail invariant measures, and Markov 
measures, our present identification of IFS measures on path space 
is new. (iii) In order to prepare the reader for the IFS path space 
measures, it will be necessary for us to begin with an account of 
tail invariant measures, and Markov measures. The tools involved 
there for generalized Bratteli diagrams are also needed in our 
introduction of the new IFS measures. But this means our main 
results for IFS measures will be postponed to sections 
\ref{s:discrete GBD} and \ref{sect MBD} below, after
the necessary preliminaries have been presented.  

Generalized Bratteli diagrams considered here arise in various 
areas and have many applications. We mention Cantor and Borel 
dynamics where they are used to construct models of 
transformations, see \cite{HermanPutnamSkau1992}, 
\cite{GiordanoPutnamSkau1995},  \cite{Durand2010}, 
\cite{BezuglyiKarpel2016}, \cite{BezuglyiDooleyKwiatkowski2006}.
Measurable Bratteli diagrams can be met in the theory of Markov
chains, we refer to \cite{Nummelin_1984} as an example of such 
literature.  Several other 
applications come to mind, 
(1) models from financial mathematics, and (2) neural networks. In 
each case, the role of the IFS measures must be specified.   
In some applications, there are important limit theorems, 
for example for financial derivative models both discrete and 
continuous pricing formulas are important. And it may be stated in 
the setting of generalized Bratteli diagrams, measurable setting.
We mean first of all 
financial derivative models, e.g., binomial models; vs 
continuous/measurable (cm), e.g., pricing of options via Ito 
calculus, see e.g., \cite{BrunickShreve2013}, 
\cite{KaratzasShreve1998}. In these applications, we also have
theorems to the effect that the continuous models are limits of 
discrete counterparts. Typically, the limit arguments involve the 
Central Limit Theorem from probability.
Another case of discrete models take the form of deep neural network
models, deep means a “large number of levels”, so many steps through
the levels in our diagrams.

The organization of the paper is the following. Section 
\ref{s:Basics} contains the basic definitions related to the 
concept of generalized Bratteli diagram (discrete case), 
and the description of various classes of measures on the path
space of a generalized Bratteli diagrams. We consider
tail invariant, shift-invariant, and Markov measures and their 
relations to a semibranching function system generated by a 
stationary generalized Bratteli diagram. 
In section \ref{s:discrete GBD}, we consider a semibranching 
function system $\{\tau_e\}$ defined on the path space of a 
stationary generalized Bratteli diagram and indexed by the
edge set $E$. We prove there one of the main results by giving
necessary and sufficient conditions on the existence of an
IFS measure for $\{\tau_e\}$, see Theorem \ref{thm IFS stat BD}.
Section \ref{sect MBD} focuses on 
measurable Bratteli diagrams. Since this notion is relatively new,
we give detailed definitions and discuss the properties of such
diagrams. Then we prove a measurable version of the main theorem 
about the existence of an IFS measure, Theorem \ref{thm IFS MBD}.

At the end of this introduction, we mention the literature 
that may be interesting for the reader. The literature on standard
Bratteli diagrams  their applications in 
dynamics is very extensive. We mention here the following 
pioneering papers 
\cite{HermanPutnamSkau1992}, \cite{GiordanoPutnamSkau1995}, 
\cite{DurandHostSkau1999}, a recent book \cite{Putnam2018}, and
surveys \cite{Durand2010}, \cite{BezuglyiKarpel2016},
\cite{BezuglyiKarpel2020}. 
Generalized Bratteli diagrams are less studied. The stationary
case uses the Perron-Frobenius theory for infinite matrices. 
We refer to the book \cite{Kitchens1998} and the literature there.
These diagrams are discussed in \cite{BezuglyiJorgensen2022} and
\cite{Bezuglyi_Jorgensen_Sanadhya_2022}. 
More references and numerous connections with other areas can be 
found therein. In particular, the following papers on IFS measures 
and fractals are related to the current paper
\cite{BratteliJorgensen1999}, \cite{BratteliJorgensen2002}, 
\cite{DutkayJorgensen2009}, \cite{DutkayJorgensen2010},
\cite{DutkayJorgensen2014a}, \cite{Jorgensen2006}, 
\cite{Jorgensen2018(book)}, \cite{RavierStrichartz_2016},
\cite{Alonso-Strichartz_2020}, \cite{CaoHassleretal_2021}.

\section{Basics on generalized Bratteli diagrams} \label{s:Basics}

We consider the fundamentals of path space for generalized Bratteli 
diagrams in this section. 
This notion was first introduced in 
\cite{BezuglyiDooleyKwiatkowski2006} under 
the name of Borel-Bratteli diagram. More detailed exposition of 
this concept can be found in \cite{BezuglyiJorgensen2022}. For the 
reader's convenience we give a concise version here. 

\subsection{Main definitions}\label{ss_main def}
In the introduction, we described the notion of a Bratteli diagram 
as an infinite graded graph. A natural extension
of this concept consists of consideration of diagrams with  
countably infinite levels.

\begin{definition}\label{def GBD} (\textit{Generalized Bratteli 
diagrams, vertices, edges, incidence matrices})
Let $V_0$ be a countable set (which can be identified with either 
$\N$ or $\Z$ if necessary).
Set $V_i = V_0$ for all $i \geq 1$, and $V = \bigsqcup_{i=0}^\infty 
V_i$.
A countable graded graph $B = (\mathcal{V, E})$ is called a 
\textit{generalized Bratteli diagram} 
if it has the following properties.

(i) The set of edges $\mathcal E$ of $B$ is represented as 
$\bigsqcup_{i=0}^\infty  
E_i$ where $E_i$ is the set of edges between the vertices of levels
$V_i$ 
and $V_{i+1}, i \geq 0$. 

(ii) The set $E(w, v)$ of edges $e$ between the vertices $w \in V_i$
and $v \in V_{i+1}$ is finite (or empty).
Let $f^{(i)}_{v,w} = |E(w, v)|$ where $|\cdot |$ denotes the 
cardinality of a set. It defines a sequence of infinite 
(countable-by-countable) 
\textit{incidence  matrices} $(F_i  : i \in \mathbb N_{0})$ with 
entries $F_i = (f^{(i)}_{v,w} : v \in V_{i+1}, w\in V_i),\ \   
f^{(i)}_{v,w}  \in \N_0.$

(iii) It is required that the matrices 
$F_i$ have at most \textit{finitely many non-zero entries} in each 
row. In general, we do not impose any restrictions on the columns 
of $F_i$.

(iv) The maps  $r,s : E \to V$ are defined on the diagram $B$: 
for every $e \in \E$, there are $w, v$ such that $e \in E(w, v)$; 
then $s(e) =w$ and
$r(e) = v$. They are called the \textit{range} ($r$) and 
\textit{source} 
($s$) maps. 

(v) For  every $w \in V_i, \; i \geq 0$,
there exist an edge $e \in E_i$ such that $s(e) = w$; for every 
$v\in V_i,\; i >1$, there exists an edge $e' \in 
E_{i-1}$ such that $r(e') = w$. In other 
words, every row and every column of the incidence matrix $F_i$ has 
non-zero entries.

(vi) If all entries of incidence matrices $F_n$ are zero or ones,
the  corresponding generalized Bratteli diagram is called a 0-1 
diagram.
\end{definition}

\begin{remark}
(1)  It follows from Definition \ref{def GBD} that the structure of 
every generalized 
Bratteli diagram is completely determined by a sequence of matrices 
$(F_n)$ such that every matrix $F_n$ satisfies (iii) and (iv). 
Indeed, the  entry $f_{v,w}^{(n)}$ indicates  the number of edges 
between the vertex $w \in V_n$ and vertex 
 $v\in V_{n+1}$. It defines the set $E(w, v)$; then one takes 
$$
E_n = \bigcup_{w\in V_n, v \in V_{n+1}} E(w, v).
$$
In this case, we write $B= B(F_n)$. If all $F_n = F$, the 
corresponding generalized Bratteli diagram $B(F)$ is called 
\textit{stationary.}

(2) If $V_0$ is a singleton, and each $V_n$ is a finite set, 
then we obtain the standard definition of a Bratteli diagram
originated in 
\cite{Bratteli1972}. Later it was used in the theory
of $C^*$-algebras  and dynamical systems for solving important 
classification  problems of Cantor dynamics and construction of
various models of homeomorphisms of a Cantor set (for references, 
see Introduction). 
\end{remark}

\begin{definition}\label{def path space} 
(\textit{Path space and cylinder sets})
A finite or infinite \textit{path} in a generalized
Bratteli diagram $B = (V,E)$ is a
sequence of edges $(e_i : i \geq 0)$ such that $r(e_i) = s(e_{i+1})$
for all $i\geq 0$. 
Denote by $X_B$ the set of all infinite paths. Every finite path 
$\ol e =
(e_0, \ldots , e_n)$ determines a \textit{cylinder subset} 
$[\ol e]$ of $X_B$:
$$
[\ol e] := \{x = (x_i) \in X_B : x_0 = e_0, \ldots, x_n = e_n\}.
$$
The collection of all cylinder subsets forms a base of neighborhoods
for a
topology on $X_B$. In this topology, $X_B$ is a  Polish 
zero-dimensional space, and every cylinder set is clopen. 
In general, $X_B$ is not locally compact.
But if the set $s^{-1}(v)$ is 
finite for every vertex $v \in \mathcal V$, then the path space
$X_B$ is locally compact.
\end{definition}

In the following remark, we formulate several
statements about properties of generalized Bratteli diagrams and
their path spaces. 

\begin{remark} \label{rem prop BD}
(1) If $x = (x_i)$ is a point in $X_B$, then it is obviously 
represented as the intersection of clopen sets:
\be \label{eq_ x as cylinders}
\{x\} = \bigcap_{n\geq 0} [\ol e]_n
\ee 
where $[\ol e]_n = [x_0, \ldots ,x_n]$. 
 
(2) Define a metric on $X_B$ compatible with the clopen topology:  
for $x = (x_i)$ and $y = (y_i)$ from $X_B$, we set
$$
\mathrm{dist}(x, y) = \frac{1}{2^N},\ \ \ N = \min\{i \in 
\N_0 : x_i \neq y_i\}.
$$

(3) We will \textit{assume} 
that the diagram $B$ is chosen so that the space $X_B$ 
\textit{has no isolated points}.  This means that every column of the
incidence matrix  $F_n, n \in \N_0,$ has more than one non-zero 
entry. 

(4) Let $\mathbbm 1 = (..., 1, 1,....)$ be the vector indexed by 
$v\in V_0$ such that every entry equals 1. Define $H^{(n)} := F_{n-1}
\cdots F_0 \mathbbm 1$. Let $E(V_0, v), v \in V_n,$ denote the set 
of all 
finite paths between $V_0$ and a fixed vertex $v\in V_n$. Then
$H^{(n)}_v = |E(V_0, v)|$.

(5) Let  $X_v^{(n)}$ be a subset of $X_B$ such that 
\be\label{eq_X_v}
X_v^{(n)} = \bigcup_{v_0 \in V_0}\bigcup_{\ol e \in E(v_0, v)} 
[\ol e].
\ee
For every level $V_n$, the collection $\{X_v^{(n)} : v \in V_n \}$
forms a  partition $\zeta_n$ of $X_B$ into disjoint clopen sets. 
Every set $X_v^{(n)}$ is a \textit{finite union} of cylinder sets. 
The 
number of the cylinder sets here   is exactly  $H^{(n)}_v$.
The sequence of partitions $(\zeta_n)$ is refining. 
According to \eqref{eq_X_v}, the cylinder sets from all $X_v^{(n)}$ 
generate the topology (and Borel $\sigma$-algebra) on  $X_B$. 
\end{remark} 

For a generalized Bratteli diagram $B$, define the 
\textit{tail equivalence relation} $\mathcal R$ on the path space 
$X_B$.

\begin{definition}\label{def tail} 
(\textit{Tail equivalence relation})
It is said that two infinite paths $x = (x_i)$ and $y = (y_i)$ are 
\textit{tail equivalent} if there
exists  $m \in \N$ such that $x_i = y_i$ for all $i \geq m$. Let 
$[x]_{\mathcal R} := \{ y \in X_B : (x,y) \in \mathcal R\}$ 
be the set of points 
tail  equivalent to $x$. We say that a point $x$ is 
\textit{periodic}
if $| [x]_{\mathcal R} | < \infty$.  If there is no periodic 
points, then the tail equivalence relation is called  
\textit{aperiodic}.
\end{definition}

Without loss of generality, we will consider generalized Bratteli 
diagrams with  aperiodic $\mathcal R$. 
Clearly, $\mathcal R$ is a hyperfinite countable  Borel equivalence
relation, see  \cite{DoughertyJacksonKechris1994} for definitions. 

\begin{definition}\label{def irreducible}
It is  said  that a generalized Bratteli diagram $B$ is 
\textit{irreducible} if, for any two vertices $v$ and 
$w$ and any $n \in \N_0$, there exists a level $V_m (m >n)$ 
such that $w \in V_n$ and $v\in V_m$
are connected by a finite path. This is equivalent to the property 
that, for any fixed $v,w$, there exists $m \in \N$ such that the 
product of matrices $F_{m-1} \cdots F_n$ has a non-zero 
$(v,w)$-entry.
\end{definition}

\subsection{Measures on the path space of a Bratteli diagram}
\label{ss_measures} 

In this subsection, we will consider two classes of Borel measures
on the path space $X_B$ of a generalized Bratteli diagram. They are
tail invariant measures and Markov measures. 

\begin{definition}\label{def tail inv m} 
(\textit{Tail equivalent measures})
Let $B = (\mathcal{\mathcal{V, E}})$ be a 
generalized Bratteli diagram, and $X_B$ the path space of  $B$. 
A Borel measure $\mu$ on $X_B$ (finite or $\sigma$-finite) is 
called \textit{tail invariant} if, for any two finite paths  $\ol e$
and $\ol e'$ such that $r(\ol e) = r(\ol e')$, one has 
\be\label{eq_inv measures}
\mu([\ol e]) = \mu([\ol e']),
\ee
where $[e]$ and $[e']$ denote the corresponding cylinder sets. 
\end{definition}

If $\mu(X_B) = 1$, then the property of tail invariance means that 
the probability to arrive
at a vertex $v \in V_n$ does not depend on a starting point $w \in 
V_0$ and does not depend on the path connecting $w$ and $v$.

Let $\mu$ be a Borel tail invariant measure on $X_B$. Relation 
\eqref{eq_inv measures}  defines a  sequence of
non-negative  vectors $(\mu^{(n)})$ where $\mu^{(n)} =
( \mu^{(n)}_v : v \in V_n)$:
\be\label{eq def og mu(n)}
\mu^{(n)}_v = \mu([\ol e]), \ \ \ \ol e\in E(V_0, v), \ v \in V_n.
\ee
Because $\mu$ is tail invariant the value $\mu^{(n)}_v $ does
not depend on the choice of $\ol e\in E(V_0, v)$. 

The following theorem is a key tool in the study of tail invariant 
measures, see \cite{BezuglyiKwiatkowskiMedynetsSolomyak2010}, 
\cite{Durand2010}, \cite{BezuglyiKarpel2016}, 
\cite{BezuglyiJorgensen2022}.
\begin{theorem}\label{thm inv measures} 
Let $B =(\mathcal V,\E)$ be  a generalized Bratteli diagram defined
by a sequence $(F_n)$ of incidence matrices.
Let $\mu$ be a Borel probability measure  on the path space $X_B$ 
of 
$B$ which is tail invariant. Then the corresponding sequence of 
vectors 
$\mu^{(n)}$ (defined as in \eqref{eq def og mu(n)}) satisfies the 
property 
\be\label{eq_inv meas via A_n}
A_n \mu^{(n+1)} = \mu^{(n)}, 
\ee
where $A_n = F_n^T$ is the transpose of $F_n$.

Conversely, if a sequence of vectors $\mu^{(n)}$ satisfies 
\eqref{eq_inv meas via A_n}, then it defines  a unique  tail 
invariant measure $\mu$.
 
 The theorem remains true for $\sigma$-finite measures $\nu$ 
 satisfying the property 
  $\nu([\ol e]) < \infty$ for every cylinder set $[\ol e]$.
\end{theorem}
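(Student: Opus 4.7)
The plan is to prove the two implications separately and then observe that the argument extends verbatim to the $\sigma$-finite case.

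For the forward direction, assume $\mu$ is tail invariant. First I would check that $\mu^{(n)}_v := \mu([\ol e])$ for $\ol e \in E(V_0, v)$ is well-defined: if $\ol e, \ol e'$ both end at $v$, then $r(\ol e) = r(\ol e') = v$ and \eqref{eq_inv measures} immediately gives $\mu([\ol e]) = \mu([\ol e'])$. To derive the recurrence, I fix $w \in V_n$ and any $\ol e \in E(V_0, w)$. By condition (iii) of Definition \ref{def GBD}, only finitely many edges $e'$ satisfy $s(e') = w$, so the decomposition $[\ol e] = \bigsqcup_{v \in V_{n+1}} \bigsqcup_{e' \in E(w,v)} [\ol e \cdot e']$ is a finite disjoint union. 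Each cylinder $[\ol e \cdot e']$ carries mass $\mu^{(n+1)}_{r(e')} = \mu^{(n+1)}_v$, and there are $f^{(n)}_{v,w}$ edges in $E(w,v)$, so additivity of $\mu$ yields $\mu^{(n)}_w = \sum_{v \in V_{n+1}} f^{(n)}_{v,w}\, \mu^{(n+1)}_v = (F_n^T \mu^{(n+1)})_w = (A_n \mu^{(n+1)})_w$.

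For the converse, given a sequence $(\mu^{(n)})$ with $A_n \mu^{(n+1)} = \mu^{(n)}$, I define a set function on cylinders by $\mu([\ol e]) := \mu^{(n+1)}_{r(\ol e)}$ when $\ol e$ has length $n+1$. The same finite decomposition together with the hypothesis shows consistency under one-step refinement, and induction yields consistency under arbitrary refinement. Extending additively to the algebra $\A_B$ of finite disjoint unions of cylinders gives a well-defined finitely additive premeasure. For countable additivity I would invoke the crucial fact that each cylinder $[\ol e]$ is \emph{compact}: condition (iii) ensures that from every vertex only finitely many edges emanate, so $[\ol e]$ is homeomorphic to the inverse limit of a sequence of finite sets, hence compact. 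Any countable disjoint covering of a set in $\A_B$ by clopen cylinders is then an open cover of a compact set, so all but finitely many members are empty, and $\sigma$-additivity reduces to the finite additivity already verified. Carathéodory's theorem then produces a unique Borel measure on $X_B$ with the prescribed cylinder values, and tail invariance is immediate from the construction.

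The main obstacle is this $\sigma$-additivity step, which is precisely where condition (iii) of Definition \ref{def GBD} enters; without row-finiteness a cylinder could fail to be compact and countable additivity would need a separate verification, for instance via Kolmogorov's extension theorem. The $\sigma$-finite statement follows with no essential change: as long as $\mu^{(n)}_v < \infty$ for every pair $(n,v)$, which is exactly the hypothesis $\nu([\ol e]) < \infty$ on each cylinder, the premeasure on $\A_B$ takes finite values on cylinders, the compactness argument still gives $\sigma$-additivity, and Carathéodory extension yields a (possibly infinite) Borel measure that is tail invariant and uniquely determined by its values on cylinders.
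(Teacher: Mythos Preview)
The paper does not actually prove this theorem; it is quoted with references to \cite{BezuglyiKwiatkowskiMedynetsSolomyak2010}, \cite{Durand2010}, \cite{BezuglyiKarpel2016}, \cite{BezuglyiJorgensen2022}. So there is no in-paper argument to compare against, and your proposal must stand on its own.

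There is a genuine gap, and it comes from a misreading of condition (iii) in Definition~\ref{def GBD}. The incidence matrix $F_n$ has entries $f^{(n)}_{v,w}$ with \emph{row} index $v\in V_{n+1}$ and \emph{column} index $w\in V_n$; condition (iii) says each \emph{row} has only finitely many nonzero entries, i.e., $|r^{-1}(v)|<\infty$ for every $v\in V_{n+1}$, and the paper explicitly imposes no restriction on columns. Hence $s^{-1}(w)$ may well be infinite. This is confirmed in Definition~\ref{def path space}, where local compactness of $X_B$ is stated only under the additional hypothesis that $s^{-1}(v)$ is finite for every vertex.

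This affects both directions of your argument. In the forward direction the decomposition $[\ol e]=\bigsqcup_{e':\,s(e')=w}[\ol e\cdot e']$ is in general a \emph{countable} disjoint union, not a finite one; this is harmless because $\mu$ is already a Borel measure and you can simply invoke countable additivity, so the conclusion $A_n\mu^{(n+1)}=\mu^{(n)}$ survives with that one-word change. In the converse direction, however, your compactness argument collapses: a cylinder $[\ol e]$ need not be compact, so a countable clopen cover need not reduce to a finite one, and $\sigma$-additivity of the premeasure does not follow from finite additivity in the way you claim. The fix you mention in passing---Kolmogorov's extension theorem on the Polish space $X_B\subset\prod_n E_n$---is the correct route and should be the main argument, not a parenthetical fallback. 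For the $\sigma$-finite case one applies Kolmogorov on each clopen piece $X^{(0)}_w=\{x:s(x)=w\}$ (where the total mass is $\mu^{(0)}_w<\infty$) and then patches the resulting finite measures together over the countable partition $\{X^{(0)}_w:w\in V_0\}$.
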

 
The other interesting class of measures on Bratteli diagrams is 
Markov
measures. In the context of Bratteli diagrams, these measures were 
considered in \cite{DooleyHamachi2003}, \cite{Renault2018}, 
\cite{BezuglyiJorgensen2022} and some other papers.

\begin{definition}\label{def Mark meas} (\textit{Markov measures})
Let $B = (\mathcal{V, E})$ be  a generalized Bratteli diagram
constructed by 
a sequence of incidence matrices $(F_n)$.  Let $q = (q_{v})$ be a 
strictly positive  vector, $q_v >0, v\in V_0$, and let $(P_n)$ 
be a sequence of non-negative infinite matrices with entries 
$(p^{(n)}_{v,e})$ where $v \in V_n, e \in E_{n}, n= 0, 1, 2, 
\ldots $. To define a
\textit{Markov measure} $m$, we require that the sequence $(P_n)$
 satisfies the  following properties:
\begin{equation}\label{defn of P_n}
(a)\ \ p^{(n)}_{v,e} > 0 \ \Longleftrightarrow \ (s(e) = v); \ \ \ \
(b)\ \  \sum_{e : s(e) = v} p^{(n)}_{v,e} =1.
\end{equation}
Condition \eqref{defn of P_n}(a) shows that $p^{(n)}_{v,e}$ is 
positive 
only on the edges outgoing from the vertex $v$, and therefore the
 matrices $P_n$ and $A_n =F_n^T$ share the same set of zero entries.
 For any cylinder set $[\overline e] = [(e_0, e_1, \ldots , e_n)]$
generated by the path $\ol e$ with $v =s(e_0) \in V_0$, we set
\begin{equation}\label{eq m([e])}
m([\overline e]) = q_{s(e_0)}p^{(0)}_{s(e_0), e_0} \cdots 
p^{(n)}_{s(e_n), e_n}.
\end{equation}
Relation \eqref{eq m([e])} defines the value of the measure $m$ 
of the set $[\ol e]$.
By \eqref{defn of P_n}(b),  this measure satisfies the 
\textit{Kolmogorov consistency condition} and can be extended to the
$\sigma$-algebra of  Borel sets. 
To emphasize that $m$ is generated by a sequence of 
\textit{stochastic matrices}, we will also write $m = m(P_n)$.

If all stochastic matrices $P_n$ are equal to a matrix $P$, then 
the corresponding measure $m(P)$ is \textit{called stationary Markov
measure.}
\end{definition}

We refer to \cite{BezuglyiJorgensen2022} for a detailed study of 
Markov measures. We mention here only the following result.

\begin{theorem}\label{thm inv meas is Markov}
Let $\nu$ be a tail invariant probability 
measure on the path space $X_B$  of a generalized Bratteli diagram
$B = (\mathcal{V, E})$. Then there
exists a sequence of Markov matrices $(P_n)$ such that $\nu = 
m(P_n)$.
\end{theorem}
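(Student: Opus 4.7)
The plan is to read off the Markov kernel directly from the sequence of level vectors $\nu^{(n)} = (\nu^{(n)}_v)_{v \in V_n}$ supplied by Theorem \ref{thm inv measures}. Rewriting the compatibility $A_n \nu^{(n+1)} = \nu^{(n)}$ as a sum over edges rather than over vertices yields the conservation law
\be\label{plan_balance}
\nu^{(n)}_w \;=\; \sum_{v \in V_{n+1}} f^{(n)}_{v,w}\,\nu^{(n+1)}_v \;=\; \sum_{e \in E_n,\; s(e)=w} \nu^{(n+1)}_{r(e)}, \qquad w \in V_n,
\ee
which is exactly the normalization needed to turn the ratios $\nu^{(n+1)}_{r(e)} / \nu^{(n)}_{s(e)}$ into transition probabilities along the outgoing edges at $w$. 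Conceptually, the argument is a Bayes-style disintegration of $\nu$ along the level structure of $B$.

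Concretely, I would set $q_v := \nu^{(0)}_v$ for $v \in V_0$; this is a probability vector on $V_0$ because the sets $\{X_v^{(0)}\}_{v \in V_0}$ partition $X_B$ and $\nu(X_v^{(0)}) = \nu^{(0)}_v$. For each $n \geq 0$, each $w \in V_n$ with $\nu^{(n)}_w > 0$, and each $e \in E_n$ with $s(e) = w$, define
\[
p^{(n)}_{w,e} := \frac{\nu^{(n+1)}_{r(e)}}{\nu^{(n)}_w}.
\]
At exceptional vertices $w$ with $\nu^{(n)}_w = 0$, assign $p^{(n)}_{w,\cdot}$ arbitrarily (say uniformly) on the finite set $\{e \in E_n : s(e) = w\}$, which is finite by axiom (iii) of Definition \ref{def GBD}. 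Condition \eqref{defn of P_n}(a) then holds by construction (the kernel is supported on outgoing edges at $w$), and condition \eqref{defn of P_n}(b) is exactly the identity \eqref{plan_balance}.

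Agreement $\nu = m(P_n)$ on cylinder sets is then a telescoping computation. For $\ol e = (e_0, \ldots, e_n)$ with every intermediate $\nu^{(k)}_{s(e_k)}$ positive,
\[
m([\ol e]) \;=\; \nu^{(0)}_{s(e_0)}\prod_{k=0}^{n} \frac{\nu^{(k+1)}_{r(e_k)}}{\nu^{(k)}_{s(e_k)}} \;=\; \nu^{(n+1)}_{r(e_n)} \;=\; \nu([\ol e]),
\]
where the cancellation uses $r(e_k) = s(e_{k+1})$ and the final equality is \eqref{eq def og mu(n)}. Since two Borel probability measures agreeing on the clopen cylinder algebra, which generates the Borel $\sigma$-algebra on $X_B$ (Remark \ref{rem prop BD}(5)), must coincide, this yields $\nu = m(P_n)$.

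The only step requiring care is the degenerate case in which some $\nu^{(k)}_{s(e_k)}$ vanishes. Let $k \geq 0$ be the minimal such index. The factor at level $k$ in the Markov product $m([\ol e])$ --- namely $q_{s(e_0)}$ when $k=0$, or $p^{(k-1)}_{s(e_{k-1}), e_{k-1}} = \nu^{(k)}_{s(e_k)}/\nu^{(k-1)}_{s(e_{k-1})} = 0$ when $k\ge 1$ --- vanishes, so $m([\ol e]) = 0$ before any arbitrary choice at level $k$ is reached. On the other hand, monotonicity of $\nu$ gives $\nu([\ol e]) \le \nu^{(k)}_{s(e_k)} = 0$. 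Both sides therefore agree on degenerate cylinders as well, and the arbitrary extension of $P_n$ on null vertices is harmless. This bookkeeping is the only genuine subtlety; the rest of the argument reduces to the observation \eqref{plan_balance}.
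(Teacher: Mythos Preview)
The paper does not actually prove this theorem in the text; it is stated with a reference to \cite{BezuglyiJorgensen2022}. Your argument is the standard one and is essentially correct: the disintegration $p^{(n)}_{w,e}=\nu^{(n+1)}_{r(e)}/\nu^{(n)}_{w}$, the telescoping identity, and the handling of null vertices are all fine.

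One small point worth noting: Definition~\ref{def Mark meas} as written requires $q_v>0$ for all $v\in V_0$ and $p^{(n)}_{v,e}>0$ \emph{whenever} $s(e)=v$, not merely that the kernel be supported on outgoing edges. Your construction can produce $q_v=0$ or $p^{(n)}_{w,e}=0$ at vertices or edges lying outside the support of $\nu$, so strictly speaking the resulting $(q,P_n)$ need not satisfy condition \eqref{defn of P_n}(a) in the paper's sense. This is a cosmetic mismatch rather than a mathematical gap: one can either relax the positivity requirement in the definition (as most authors do) or, when $\nu$ has full support, observe that all the ratios are automatically positive. Your degenerate-case bookkeeping already shows that the arbitrary choices on null vertices do not affect the identity $\nu=m(P_n)$.
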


For stationary generalized Bratteli diagrams, we can find explicit
formulas for tail invariant measures. In the following statement,
we use the terminology from the Perron-Frobenius theory, see
\cite{Kitchens1998} for details. 

\begin{theorem} \label{thm inv meas stat BD} 
\cite{BezuglyiJorgensen2022}
(1) Let $B = B(F)$ be a stationary Bratteli diagram such 
that the incidence matrix $F$ (and therefore $A = F^T$) is 
irreducible, aperiodic, and recurrent. Let $t = (t_v : v \in V_0)$ 
be a right eigenvector  corresponding to the Perron eigenvalue 
$\lambda$ for $A$, $At = \lambda t$. 
Then there exists a tail 
invariant measure $\mu$ on the path space $X_B$ whose values on 
cylinder sets are determined by the following rule: for every finite
path $\ol e(w, v)$ that begins at $w \in V_0$ and 
 terminates at $v \in V_n$, $n \in \N_0$, we  set
 \be\label{eq inv meas stat BD}
\mu^{(n)}_v =  \mu([\ol e(w, v)]) = \frac{t_v}{\lambda^{n}}. 
 \ee

(2) The measure $\mu$ is finite if and only if the right eigenvector
$t = (t_v)$ has the property $\sum_v t_v <\infty$.
\end{theorem}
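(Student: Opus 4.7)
The plan is to reduce part (1) to Theorem \ref{thm inv measures}, which sets up a bijection between tail invariant Borel measures on $X_B$ and sequences of non-negative vectors $(\mu^{(n)})$ satisfying the consistency recursion $A_n \mu^{(n+1)} = \mu^{(n)}$. Since $B = B(F)$ is stationary, $A_n = A = F^T$ for every $n$, so it suffices to exhibit one non-negative sequence $(\mu^{(n)})$ with $A\mu^{(n+1)} = \mu^{(n)}$ whose entries agree with the formula in \eqref{eq inv meas stat BD}.

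The natural ansatz in the stationary setting is $\mu^{(n)} = c_n t$ for a scalar sequence $c_n$, since $t$ is already an eigenvector of $A$. Then $A(c_{n+1}t) = c_{n+1}\lambda t$, so the recursion reduces to $c_{n+1}\lambda = c_n$; normalizing $c_0 = 1$ gives $c_n = \lambda^{-n}$ and hence $\mu^{(n)}_v = t_v/\lambda^n$ as claimed. Non-negativity (in fact strict positivity) of $(\mu^{(n)})$ is immediate since $t$ is the Perron eigenvector and $\lambda > 0$. The crucial point that $t_v/\lambda^n$ depends only on the endpoint $v$, and not on the particular path in $E(w,v)$, is exactly what tail invariance of the resulting measure $\mu$ demands (see Definition \ref{def tail inv m} and \eqref{eq def og mu(n)}), so Theorem \ref{thm inv measures} delivers the desired $\mu$.

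For part (2), I would invoke the level-$0$ partition $\zeta_0 = \{X_v^{(0)} : v \in V_0\}$ of $X_B$ from Remark \ref{rem prop BD}(5). Each $X_v^{(0)}$ is the clopen set of paths starting at $v$, and by construction $\mu(X_v^{(0)}) = \mu^{(0)}_v = t_v$. Summing,
$$
\mu(X_B) \;=\; \sum_{v \in V_0} \mu(X_v^{(0)}) \;=\; \sum_{v \in V_0} t_v,
$$
which is finite if and only if $\sum_v t_v < \infty$. As an internal check, the same value emerges at every level: using $H^{(n)} = F^n \mathbbm{1}$ from Remark \ref{rem prop BD}(4),
$$
\sum_{v \in V_n} H^{(n)}_v \mu^{(n)}_v \;=\; \lambda^{-n}\langle F^n\mathbbm{1},\, t\rangle \;=\; \lambda^{-n}\langle \mathbbm{1},\, A^n t\rangle \;=\; \langle \mathbbm{1}, t\rangle,
$$
so the total mass is independent of $n$, confirming consistency.

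The essentially only substantive point in the argument is the existence of the positive right Perron eigenvector $t$ for the infinite non-negative matrix $A$; this is where the hypotheses of irreducibility, aperiodicity, and recurrence enter, via the Vere-Jones/Perron-Frobenius theory for countable non-negative matrices (cf.\ \cite{Kitchens1998}). Once $t$ is in hand, the rest reduces to verifying a single linear identity and summing one series, with no further obstacles.
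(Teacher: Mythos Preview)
Your argument is correct and is exactly the natural route: verify that $\mu^{(n)} := \lambda^{-n} t$ satisfies $A\mu^{(n+1)} = \mu^{(n)}$ and then invoke Theorem~\ref{thm inv measures}; part (2) follows from summing $\mu(X_v^{(0)}) = t_v$ over $v\in V_0$. Note, however, that the paper does \emph{not} supply its own proof of this theorem---it is quoted from \cite{BezuglyiJorgensen2022} and stated without argument---so there is nothing in the present paper to compare your proof against. Your derivation is precisely the standard one and is consistent with the remarks the paper makes immediately after the statement (namely $\mu(X_w^{(0)}) = t_w$ and $\mu(X_v^{(n)}) = H_v^{(n)} t_v/\lambda^n$).
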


In particular, $\mu(X_w^{(0)}) = t_w, w \in V_0$, and
\begin{equation}
    \mu(X_v^{(n)}) = H_v^{(n)} \frac{t_v}{\lambda^{n}}.
\end{equation}

\subsection{Semibranching function systems on Bratteli diagrams}
\label{subs s.b.s.}

Here we give the definition of a semibranching function system 
following 
\cite{MarcolliPaolucci2011} and \cite{BezuglyiJorgensen2015}.
This notion was used in the literature, in particular, for the 
construction of representations
of Cuntz-Krieger algebras, see \cite{BezuglyiJorgensen2015},  
\cite{FarsiJKP2018a}, \cite{FarsiJKP2018b}.

\begin{definition}\label{s.f.s.} 
(\textit{Semibranching function systems and coding maps})

(1) Let $(X,\mu)$ be a probability 
measure space with non-atomic measure $\mu$. We consider a  family 
$\{\sigma_i : i\in \Lambda\}$ of one-to-one $\mu$-measurable maps 
 indexed by a finite (or countable) set $\Lambda$. 
The family  $\{\sigma_i\}$ is called a \textit{semibranching 
function
system  (s.f.s.)} if the following conditions hold:

(i) $\sigma_i$ is defined on a subset $D_i$ of $X$ and takes values 
in $R_i = \sigma_i(D_i)$ such that $\mu(R_i \cap R_j) = 0$ for 
$i\neq j$ and $\mu(X\setminus \bigcup_{i\in \Lambda} R_i) = 0$;

(ii) the measure $\mu \circ \sigma_i$ is equivalent to  $\mu$ and,
i.e., 
$$
\rho{_\mu}(x, \sigma_i) := \frac{d\mu\circ \sigma_i}{d\mu}(x) > 0 
\ \ \mbox{for $\mu$-a.e. $x\in D_i$};
$$

(iii) there exists an endomorphism $\sigma : X \to X$ (called a 
\textit{coding map}) such that $\sigma\circ \sigma_i(x) = x$ for 
$\mu$-a.e. $x\in D_i, \ i \in \Lambda$.

If, additionally to properties (i) - (iii), we have $\bigcup_{i\in 
\Lambda}D_i = X$ ($\mu$-a.e.),  then  the s.f.s. $\{\sigma_i : i\in 
\Lambda\}$ is called \textit{saturated.}

(2) It is said that a saturated s.f.s. satisfies 
\textit{condition C-K}
\footnote{C-K stands for Cuntz-Krieger.}
if for any $i\in \Lambda$ there exists a subset $\Lambda_i \subset 
\Lambda$ such that up to a set of measure zero
$$
D_i = \bigcup_{j\in \Lambda_i} R_j.
$$
In this case, condition C-K defines a 0-1 matrix $\wt A$ by the 
rule:
\begin{equation}\label{C-K defines A}
\wt a_{i,j} =1 \ \ \Longleftrightarrow \ \ j\in \Lambda_i, \ \ i \in
\Lambda.
\end{equation}
Then the matrix $\wt A$ is of the size $|\Lambda|\times |\Lambda|$.
\end{definition}

\textbf{Semibranching function system associated with a generalized
stationary Bratteli diagram}.

Let $B$ be a generalized stationary 0-1 Bratteli diagram. We
construct an s.f.s. 
$\Sigma$ which is defined on the path space $X_B$ endowed with a 
Markov measure $m$. As we will see below, this measure must 
have some
additional properties to satisfy Definition \ref{s.f.s.}. 
The role of the index set  $\Lambda$ for this s.f.s. 
is played by the edge set $E$ which is the set of edges between 
any two consecutive levels of $B$.  For 
any $e \in E$, we denote
\begin{equation}\label{defn of D_e}
D_e = \{y = (y_i) \in X_B : s(y) = s(y_0) = r(e)\},
\end{equation}
\begin{equation}\label{defn of R_e}
 R_e = \{y = (y_i) \in X_B : y_0 = e\}.
\end{equation}
We see that $D_e$ depends on $r(e)$ only so that $D_e = D_{e'}$
if and only if $r(e) = r(e')$.

The collection of maps $\{\tau_e : e \in E\}, \tau_e : D_e \to 
R_e$, is defined by the formula
\begin{equation}\label{defn of sigma_e}
     \tau_e (y) :=  (e, y_0, y_1, .... ), \quad y = (y_i).
\end{equation}
Since $s(y_0) = r(e)$, the map $\tau_e$ is well defined on $D_e$.

\begin{remark}\label{rem contractive}
We defined the metric $\mbox{dist}$ in Remark \ref{rem prop BD}. 
It follows from the definition of $\tau_e$ that 
$$
\mbox{dist}(\tau_e(x), \tau_e(y) ) = \frac{1}{2}  \mbox{dist}
(x, y), \ e \in E,
$$
that is $\tau_e$ is a contractive map for every $e$.
\end{remark}

\begin{proposition}
The system $\{D_e, R_e, \tau_e : e \in E\}$ is a saturated 
s.f.s. on the 
path space $X_B$ of a generalized stationary Bratteli diagram 
satisfying conditions (i), (iii) of Definition 
\ref{s.f.s.} and  the C-K condition. 
\end{proposition}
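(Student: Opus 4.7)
The plan is to verify each of the four conditions claimed---Definition \ref{s.f.s.}(i), (iii), the saturation property, and the C-K condition---by direct unwinding of the definitions of $D_e$, $R_e$, and $\tau_e$. The one conceptual point making everything work is stationarity: since $V_n = V_0$ and $E_n = E$ for every $n$, the one-sided shift that drops the first coordinate carries $X_B$ back into itself, so a global coding map exists. (Condition (ii), which requires a measure, is intentionally excluded from this statement.)

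For (i), the sets $\{R_e : e \in E\}$ are pairwise disjoint because the initial edge $y_0$ of a path determines which $R_e$ contains $y$, and they cover $X_B$ because $y_0 \in E_0 = E$ for every $y \in X_B$. Hence $R_e \cap R_{e'} = \emptyset$ for $e \neq e'$ and $X_B = \bigsqcup_{e \in E} R_e$ hold pointwise, so $\mu(R_e \cap R_{e'}) = 0$ and $\mu\bigl(X_B \setminus \bigcup_e R_e\bigr) = 0$ for \emph{any} Borel measure $\mu$. The map $\tau_e : D_e \to R_e$ is well defined and injective: the condition $s(y_0) = r(e)$ defining $D_e$ is exactly what permits concatenation of $e$ with $y$, and $\tau_e$ is inverted by deleting the first edge, showing it is a bijection onto $R_e$.

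For (iii), I would take the coding map to be the one-sided shift $\sigma(y_0, y_1, y_2, \ldots) := (y_1, y_2, \ldots)$; stationarity makes $\sigma(y)$ an admissible infinite path based at a vertex of $V_1 = V_0$, and the identity $\sigma \circ \tau_e(y) = y$ then holds pointwise on $D_e$. For saturation, using Definition \ref{def GBD}(v), every vertex of $V_1 = V_0$ is the range of some edge in $E_0 = E$; consequently, given any $y \in X_B$ with $s(y_0) = v$, one can pick $e \in E$ with $r(e) = v$, obtaining $y \in D_e$, so $\bigcup_{e \in E} D_e = X_B$ exactly.

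Finally, for the C-K condition I would put $\Lambda_e := \{e' \in E : s(e') = r(e)\}$, the set of edges legally following $e$. Since $y \in D_e \iff s(y_0) = r(e) \iff y_0 \in \Lambda_e$, this gives $D_e = \bigsqcup_{e' \in \Lambda_e} R_{e'}$, and the $0$-$1$ matrix $\wt A$ in \eqref{C-K defines A} satisfies $\wt a_{e,e'} = 1$ iff $r(e) = s(e')$, so $\wt A$ encodes the edge-to-edge incidence of the stationary diagram. I do not anticipate any substantial obstacle: the entire verification is a bookkeeping exercise, and the sole thing to be careful about is stationarity, which is what licenses $\sigma$ as a self-map of $X_B$ and lets the single index set $E$ describe both the first level of the diagram and the combinatorics of $\Lambda_e$ at every subsequent level.
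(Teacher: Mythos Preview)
Your proposal is correct and follows essentially the same route as the paper: define the left shift $\sigma$ as the coding map, observe that the $R_e$ partition $X_B$ pointwise (hence condition (i) holds for any measure), verify saturation via the surjectivity of $r$ from Definition~\ref{def GBD}(v), and check C-K by the decomposition $D_e = \bigsqcup_{e' : s(e') = r(e)} R_{e'}$, obtaining the same matrix $\wt A$. The only extra remark in the paper's proof is that $\sigma$ is finite-to-one (using the row-finiteness condition on $F$), which is not needed for the statement as formulated.
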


\begin{proof}
Let $\sigma : X_B \to X_B$ be defined as follows: 
for any $x = (x_i)_{i \geq 0} \in X_B$,
\begin{equation}\label{defn of sigma}
     \sigma(x) :=  (x_1, x_2, ...)
\end{equation}
It follows from (\ref{defn of sigma_e}) and (\ref{defn of sigma}) 
that the map $\sigma$ is onto and 
$$
\sigma\circ\tau_e(x) = x,\ \ \  x \in D_e;
$$
Hence $\sigma$ is a coding map.

We deduce from (\ref{defn of R_e}) that $\{R_e : 
e\in E\}$ constitutes a partition of $X_B$ into clopen sets. 
Relation
(\ref{defn of D_e}) implies that  $\{\tau_e : e \in E\}$ 
is a saturated s.f.s. Moreover, we claim that it satisfies 
condition C-K. Indeed,
\begin{equation}\label{checking C-K}
     D_e = \bigcup_{f : s(f) = r(e)} R_f,\ \ \ e \in E,
\end{equation}
because $y = (y_i) \in D_e \ \Longleftrightarrow \ s(y_0) = r(e) \ 
\Longleftrightarrow \ \exists f = y_0\ \mbox{such\ that}\ y = 
(f, y_1, ...) \ \Longleftrightarrow \ y \in 
\bigcup_{f : s(f) = r(e)} R_f$. Thus, $\Lambda_e = \{f : s(f) =
r(e)\}$, see \eqref{C-K defines A}.

Relation (\ref{checking C-K}) shows that the non-zero entries of 
the  0-1 matrix $\wt A$ from Definition \ref{s.f.s.} are defined by 
the rule:
\be\label{eq-matrix tilde A}
(\wt a_{e,f} =1) \ \Longleftrightarrow \ (s(f) = r(e)). 
\ee

We observe that the matrix $\wt A$ has the following property:
there are finitely many nonzero entries in every column of $\wt A$,
but the rows of $\wt A$ may contain infinitely many nonzero entries. 


Next, we observe that $\sigma: X_B \to X_B$ is a finite-to-one 
continuous map. Indeed,
$$
|\sigma^{-1}(x)| = |r^{-1}(r(x_1))| = \sum_{u\in V_0} f_{v,u}.
$$
The latter is finite by Definition \ref{def GBD}. 
\end{proof}

Thus, it remains to find out under what conditions property (ii) of 
Definition \ref{s.f.s.} holds. We consider here two classes: tail 
invariant measures and Markov measures.

Let $m$ be a Borel probability measure
on $X_B$. Since $X_B$ is naturally partitioned 
into a refining sequence of clopen partitions $\mathcal Q_n$
formed by cylinder sets of length $n$, we can apply de Possel's 
theorem (see, for instance, \cite{ShilovGurevich1977}). 
We have that for $m$-a.a $x$,
\begin{equation}\label{eq:Possel's}
\rho_m(x, \tau_e) = \lim_{n\to \infty} \frac{m(\tau_e([\overline
e(n)])}{m[\overline e(n)]}
\end{equation}
where $\{x\} = \bigcap_n [\overline e(n)]$.

\textit{Tail invariant measure}. 
We first consider  the case when $m$ is the tail invariant measure 
$\mu$ determined in Theorem \ref{thm inv meas stat BD}. Let 
$At = \lambda t$ where $t = (t_v)$.
If $\overline f = (f_0, f_1, ... , f_n) \in D_e$, then $r(\ol f) 
= r(f_n)$ and 
$\tau_e(\overline f) = (e, f_0, ... , f_n)$. By 
(\ref{eq inv meas stat BD}), we have  
$$
\mu([\overline f ]) = \frac{t_{r(\overline f)}}{\lambda^{n}},\ \ \ 
\mu(\tau_e([\overline f ])) = 
\frac{t_{r(\overline f)}}{\lambda^{n+1}},
$$
and therefore
\begin{equation}\label{RN for mu}
     \rho_{\mu}(x, \tau_e) = \lambda^{-1}.
\end{equation}

\textit{Stationary Markov measure}. Let $m$ be a stationary Markov
measure determined by a stochastic matrix $P$, $m = m(P)$ as in
\eqref{eq m([e])}. If $\{x\} =\bigcap_n [f(n)] \in D_e$, then
$$
m([\overline f(n)]) = q_{s(f_0)}p_{s(f_0), f_0} \cdots 
p_{s(f_n), f_n},
$$
$$
m(\tau_e([\overline f(n)])) = q_{s(e)} p_{s(e), e}p_{s(f_0), f_0} 
\cdots p_{s(f_n), f_n},
$$
and the Radon-Nikodym derivative can be found by
\begin{equation}\label{RN for nu}
 \rho_{m}(x, \tau_e) = \frac{q_{s(e)} p_{s(e),e}} {q_{s(f_0)}}.
\end{equation}

It follows from (\ref{RN for mu}) and (\ref{RN for nu}) that  
$\rho_{m}$ is positive on $D_e$ if and only if all entries of the 
vector $q = (q_v)$ are positive. The latter means that the support 
of $m$ is the entire space $X_B$.

\textit{Non-stationary Markov measure}. In this case, we need 
some additional conditions to guarantee that the Radon-Nikodym 
derivative  $\rho_m(x, \tau_e)$ is positive on $D_e$. As above, we
represent $x = (x_i)\in D_e$ by means of the sequence 
$[\overline f(n)] = [(f_0, f_1, ... , f_n)]$ such that $x_i = f_i, 
i= 0,1, ... ,n$, and find 
$$
m([f(n)]) = q_{s(f_0)} p^{(0)}_{s(f_0), f_0} \cdots 
p^{(n)}_{s(f_n), f_n}
$$
and
$$
m(\tau_e([f(n)])) = q_{s(e)} p^{(0)}_{s(e), e}p^{(1)}_{s(f_0),
f_0} \cdots p^{(n+1)}_{s(f_n), f_n}.
$$

A direct computation gives the following result. 

\begin{lemma}\label{RN for m}
Let $m$ be a Markov measure on the path space of a 
generalized stationary 0-1 
Bratteli diagram. Then $\rho_m(x, \tau_e) >0$ on $D_e$ 
if and only if
\begin{equation}\label{product convergence}
   0<  \prod_{i=1}^\infty \frac{p^{(i+1)}_{s(f_i), 
   f_i}}{p^{(i)}_{s(f_i), f_i}}  <\infty
\end{equation}
for any $x = \bigcap_n[\overline f(n)] \in D_e$
\end{lemma}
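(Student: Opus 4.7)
The plan is to evaluate the limit in de Possel's formula \eqref{eq:Possel's} by a direct computation from the explicit Markov formula \eqref{eq m([e])}. For $x = \bigcap_n [\ol{f(n)}] \in D_e$ with $\ol{f(n)} = (f_0,\ldots,f_n)$, membership in $D_e$ forces $s(f_0) = r(e)$, so $\tau_e([\ol{f(n)}])$ is exactly the cylinder $[(e,f_0,\ldots,f_n)]$ of length $n+2$. Both $m([\ol{f(n)}])$ and $m(\tau_e([\ol{f(n)}]))$ are then finite products of transition weights read off \eqref{eq m([e])}, and these products are already spelled out just before the lemma.

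Forming the quotient produces the desired telescoping-style cancellation: in the denominator the factor $p^{(i)}_{s(f_i),f_i}$ appears, while the numerator (shifted by one level) contains $p^{(i+1)}_{s(f_i),f_i}$. After isolating the prefactor one obtains
\[
\frac{m(\tau_e([\ol{f(n)}]))}{m([\ol{f(n)}])} = \frac{q_{s(e)}\, p^{(0)}_{s(e),e}\, p^{(1)}_{s(f_0),f_0}}{q_{s(f_0)}\, p^{(0)}_{s(f_0),f_0}} \prod_{i=1}^{n} \frac{p^{(i+1)}_{s(f_i),f_i}}{p^{(i)}_{s(f_i),f_i}}.
\]
The prefactor is a fixed positive real number: $q_{s(f_0)}>0$ by the strict positivity of $q$ in Definition \ref{def Mark meas}, and $p^{(0)}_{s(e),e}, p^{(0)}_{s(f_0),f_0}, p^{(1)}_{s(f_0),f_0}$ are all positive by \eqref{defn of P_n}(a) (in each case the vertex index coincides with the source of the edge). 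Taking $n \to \infty$ in \eqref{eq:Possel's} then gives $\rho_m(x,\tau_e)$ as this positive constant times the tail product, and strict positivity with finiteness of $\rho_m(x,\tau_e)$ is equivalent to the tail product lying in $(0,\infty)$, which is precisely \eqref{product convergence}.

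I do not anticipate any real obstacle: the argument is a direct bookkeeping of the explicit formulas, and the only thing requiring care is keeping the indices straight so that the $i=0$ block of the raw quotient is correctly separated into the finite prefactor (leaving the product to start at $i=1$, as stated). The positivity assumptions that make the prefactor harmless follow from the standing hypotheses on $q$ and from condition \eqref{defn of P_n}(a); no other inputs are needed beyond de Possel's theorem, which is already cited in \eqref{eq:Possel's}.
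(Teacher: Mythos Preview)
Your proposal is correct and follows exactly the approach the paper intends: the paper merely says ``A direct computation gives the following result'' after displaying the two product formulas for $m([\ol{f(n)}])$ and $m(\tau_e([\ol{f(n)}]))$, and you have carried out that computation explicitly and correctly, including the bookkeeping that isolates the $i=0$ term so the tail product begins at $i=1$ as in \eqref{product convergence}.
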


A Markov measure satisfying (\ref{product convergence}) is called a 
\textit{quasi-stationary measure}.  Condition 
(\ref{product convergence}) appeared first in 
\cite{DutkayJorgensen2014}, \cite{DutkayJorgensen2015} in a 
different context.

We summarize the above results  in the following theorem.

\begin{theorem}\label{example summary}
Given a  generalized stationary 0-1 Bratteli diagram $B$ with the 
edge set $E$, the collection of maps $\{\tau_e : D_e \to R_e\},\ 
e \in E$, defined in \eqref{defn of sigma_e} on the space  $(X_B, 
m)$, forms  a saturated s.f.s. $\Sigma$ satisfying C-K condition 
where the Markov measure $m$ is either the tail invariant measure 
$\mu$, or a stationary Markov measure $m(P)$ of 
full support, or a quasi-stationary measure Markov measure of full
support.
\end{theorem}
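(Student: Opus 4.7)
The plan is to assemble this theorem by combining the preceding Proposition with a case-by-case verification of condition (ii) of Definition \ref{s.f.s.}. The Proposition has already established items (i) and (iii), as well as the C-K condition, for the family $\{\tau_e : e \in E\}$ on the path space $X_B$; crucially, those verifications depend only on the incidence structure of $B$ and not on the choice of measure. Thus the only remaining task is to show, for each of the three distinguished measures, that the Radon-Nikodym derivative $\rho_m(x, \tau_e) = d(m\circ \tau_e)/dm$ exists and is strictly positive on $D_e$.

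The common tool for computing $\rho_m(x, \tau_e)$ is de Possel's formula \eqref{eq:Possel's}, which expresses the derivative as the limit of the ratio $m(\tau_e([\overline{e}(n)]))/m([\overline{e}(n)])$ along the refining sequence of cylinders shrinking to $x$. For the tail invariant measure $\mu$, plugging in the explicit formula $\mu([\overline f]) = t_{r(\overline f)}/\lambda^n$ from Theorem \ref{thm inv meas stat BD} and noting that prepending $e$ increases the length by one gives $\rho_\mu(x, \tau_e) = \lambda^{-1}$, which is positive because $\lambda$ is the Perron eigenvalue. For a stationary Markov measure $m(P)$ of full support, the telescoping computation in \eqref{RN for nu} yields $\rho_m(x, \tau_e) = q_{s(e)}p_{s(e),e}/q_{s(f_0)}$; full support of $m$ forces $q$ to be strictly positive and $p_{s(e),e} > 0$ whenever $e$ is an admissible edge, so the derivative is strictly positive on all of $D_e$. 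For a quasi-stationary Markov measure of full support, the analogous telescoping reduces positivity and finiteness of the derivative to the infinite product in \eqref{product convergence}, and Lemma \ref{RN for m} shows that exactly the quasi-stationarity condition ensures a positive finite limit.

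The only mildly subtle point, which I would address in a single remark, is justifying the application of de Possel's differentiation theorem in this non-compact, countable-alphabet setting; however, since the cylinder sets form a refining filtration of clopen sets whose $\mathrm{dist}$-diameters shrink to zero (see Remark \ref{rem prop BD}), the standard statement applies verbatim. Once (ii) has been verified in all three cases, every hypothesis of Definition \ref{s.f.s.} is in place and the conclusion follows. I do not anticipate any genuine obstacle: the theorem is essentially a unified restatement of the Proposition together with the three Radon-Nikodym computations \eqref{RN for mu}, \eqref{RN for nu}, and Lemma \ref{RN for m} already established in the preceding discussion.
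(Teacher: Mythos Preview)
Your proposal is correct and matches the paper's approach exactly: the paper does not give a separate proof of this theorem at all, stating only ``We summarize the above results in the following theorem,'' so it is precisely the assembly of the preceding Proposition (for conditions (i), (iii), and C-K) with the three Radon--Nikodym computations \eqref{RN for mu}, \eqref{RN for nu}, and Lemma \ref{RN for m} that you describe.
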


\begin{remark} 
Let $B$ be a generalized stationary Bratteli diagram of 
\textit{bounded size}, see \cite{BJKS_2022} for the definition. 
In particular, if the incidence matrix $A$ is banded, then $B$ is 
of bounded size. In this case, the definition of the 
Cuntz-Krieger algebra 
$\mathcal O_A$ can be given similar to the case of finite matrices.
Then we can use the methods developed in 
\cite[Theorem 4.12]{BezuglyiJorgensen2015}
to construct a representation of the Cuntz-Krieger algebra 
$\mathcal O_A$ generated by the s.f.s. defined in 
Theorem \ref{example summary}. We omit the details. 
\end{remark}

\subsection{Shift invariant measures on stationary Bratteli 
diagrams}\label{ss_shift inv}

Let $B =(\mathcal{V,E})$ be a generalized stationary Bratteli
diagram defined 
by the incidence infinite matrix $F$ and $A = F^T$. 
In this subsection, we discuss $\sigma$-invariant  measures on 
$X_B$. We will consider two cases, tail invariant measures and
Markov measures.

Recall that, for every $v \in V_1$, the row sum $H^{(1)}_v = 
\sum_{w \in V_0} f_{v,w}$ is 
finite, see Remark \ref{rem prop BD} for notation. 
In \eqref{defn of sigma}, we defined a finite-to-one endomorphism
$\sigma$ acting on the path space $X_B$. For $x = (e_0. e_1, ...)$,
we have 
\be \label{eq preimage for sigma}
\sigma^{-1}(x) = \{y = (y_i) \in X_B : r(y_0) = s(e_0), y_i = 
e_{i-1}, i \geq 1\}
\ee 
and $|\sigma^{-1}(x)| = H^{(1)}_{s(e_0)}$.

\textit{Tail invariant measures}. There is a special case of 
a generalized stationary Bratteli diagram such that the 
tail invariant measure is also shift-invariant.

\begin{proposition}
Let the matrix $A$ has a Perron-Frobenius eigenpair $(\xi,\lambda)$.
Let $\mu$ be the tail invariant measure on $X_B$ defined as in
Theorem \ref{thm inv meas stat BD}. Then $\mu$ is $\sigma$-invariant
if and only if $H^{(1)}_v = \lambda$ for all $v\in V_0$.
\end{proposition}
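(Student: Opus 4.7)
The plan is to verify $\sigma$-invariance by testing it directly on cylinder sets, which generate the Borel $\sigma$-algebra on $X_B$, and then extract the stated condition from the resulting equation.

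First, I would fix an arbitrary cylinder $[\ol e] = [(e_0, e_1, \ldots, e_n)]$ with $s(e_0) = w \in V_0$ and $r(e_n) = v \in V_{n+1}$. Using \eqref{eq preimage for sigma}, I would write
$$
\sigma^{-1}([\ol e]) \;=\; \bigsqcup_{e' \in E_0,\; r(e') = w} [(e', e_0, \ldots, e_n)],
$$
a disjoint union of $H^{(1)}_w$ cylinder sets of length $n+2$, each terminating at $v$ (viewed in level $V_{n+2}$ after identification of levels). The cylinders appearing in this union are disjoint, so their measures sum.

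Next I would apply the explicit formula \eqref{eq inv meas stat BD} from Theorem \ref{thm inv meas stat BD}. Since every prepended cylinder ends at the same vertex $v$ at level $n+2$, tail invariance yields
$$
\mu\bigl(\sigma^{-1}([\ol e])\bigr) \;=\; H^{(1)}_w \cdot \frac{\xi_v}{\lambda^{n+2}},
\qquad
\mu([\ol e]) \;=\; \frac{\xi_v}{\lambda^{n+1}}.
$$
Equality of these two quantities is equivalent to $H^{(1)}_w = \lambda$. This immediately gives both directions: if $H^{(1)}_w = \lambda$ for every $w \in V_0$, the identity $\mu \circ \sigma^{-1} = \mu$ holds on every cylinder set and extends by the $\pi$-$\lambda$ theorem to all Borel sets; conversely, if $\mu$ is $\sigma$-invariant, then applying the above to any single cylinder starting at a vertex $w$ forces $H^{(1)}_w = \lambda$. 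Since $w$ is arbitrary (every vertex in $V_0$ is the source of some edge by Definition \ref{def GBD}(v), and $\xi_v > 0$ by Perron–Frobenius so the cancellation in the equality is legitimate), the condition must hold for every $w \in V_0$.

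There is no real obstacle here; the only point requiring care is the bookkeeping between the level of the cylinder ($n+1$) and the level of its preimage under $\sigma$ ($n+2$), together with the observation that positivity of the Perron eigenvector $\xi$ ensures we may divide by $\xi_v$ when extracting the equation $H^{(1)}_w = \lambda$.
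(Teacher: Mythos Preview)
Your proposal is correct and follows essentially the same route as the paper: compute $\mu(\sigma^{-1}[\ol e])$ as a sum over $H^{(1)}_w$ prepended cylinders, compare with $\mu([\ol e])$ via the explicit formula \eqref{eq inv meas stat BD}, and read off the condition $H^{(1)}_w=\lambda$. The only cosmetic difference is that the paper packages the conclusion using de Possel's theorem to identify the Radon--Nikodym derivative $\dfrac{d\mu\circ\sigma^{-1}}{d\mu}=H^{(1)}_w\lambda^{-1}$, whereas you argue directly on cylinders and invoke the $\pi$--$\lambda$ theorem; since the ratio is constant along the refining sequence, these amount to the same thing.
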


\begin{proof} For a cylinder set $[\ol e] = [e_0, e_1, ..., 
e_n]$, calculate $m([\ol e])$ and $m(\sigma^{-1}([\ol e]))$.
it follows from  \eqref{eq preimage for sigma} that
\begin{equation}
    \sigma^{-1}([\ol e]) = \bigcup_{f : r(f) =s(e_0)} 
    [f, e_0, ..., e_n] = \bigcup_{f : r(f) =s(e_0)} [f, \ol e]
\end{equation}
and this union is disjoint. Let $\lambda$ and $\xi$ be a 
Peron-Frobenius eigenpair, $A\xi = \lambda\xi$. 
By Theorem \ref{thm inv meas stat BD}, we have 
$$
\mu([\ol e]) = \frac{\xi_v}{\lambda^n}, \qquad 
\mu([f, \ol e]) = \frac{\xi_v}{\lambda^{n+1}},\ v = r(\ol e)
$$
Then 
$$
\mu(\sigma^{-1}[\ol e]) = \sum_{f : r(f) = s(e_0)} 
\mu ([f, \ol e]) =  \sum_{f : r(f) = s(e_0)} 
\frac{\xi_v}{\lambda^{n+1}} = |\sigma^{-1} ([\ol e])| 
\frac{\xi_v}{\lambda^{n+1}} = H^{(1)}_{w} 
\frac{\xi_v}{\lambda^{n+1}}
$$
where $w = s(e_0) \in V_1$. By de Possel's theorem 
\eqref{eq:Possel's}, 
$$
\frac{d\mu\circ\sigma^{-1}}{d\mu}(x) = \lim_{n \to\infty}
\frac{\mu\circ\sigma^{-1}(\ol e]_n)}{\mu([\ol e]_n)} =
H^{(1)}_{w} \lambda^{-1}
$$
where $x = \bigcap_n [\ol e]_n$. Therefore, 
$$
\mu\circ\sigma^{-1} = \mu \ \Longleftrightarrow \ 
H^{(1)}_{w} = \sum_{u \in V_0} f_{w, u} = \lambda, \ \forall 
w \in V_0.
$$
\end{proof}

\begin{remark}
If $\max\{ H^{(1)}_v : v \in V_1\} = M <\infty$, then 
$$
\lambda^{-1} \leq \frac{d\mu\circ\sigma^{-1}}{d\mu}(x) \leq 
M\lambda^{-1}.
$$
This means that $\mu$ is equivalent to a $\sigma$-invariant measure
$\mu'$. In particular, this is true for bounded size diagrams. 
\end{remark}
\vskip 2mm

\textit{Markov measures}. Consider first a stationary Markov measure
on the path space of a generalized stationary Bratteli diagram,
see Definition \ref{def Mark meas}. For simplicity we will work
with a $0-1$ diagram; the general case is considered similarly.

Let $q = (q_v : v \in V_0)$ be a positive probability vector 
(called initial distribution). Let $P$ be a Markov matrix. For
a cylinder set $[\ol e] = [e_0, e_1,..., e_n]$, we use 
\eqref{eq m([e])} to determine the value of the corresponding 
Markov measure $m(P)$.

\begin{theorem}
For $B, q, P, m(P)$ as above, the measure $m = m(P)$ is 
$\sigma$-invariant if and only if $q P = P$. 
\end{theorem}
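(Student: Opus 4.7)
The approach mirrors the proof of the previous proposition on tail invariant measures: compute $m(\sigma^{-1}([\ol e]))$ on an arbitrary cylinder set by means of the Markov formula \eqref{eq m([e])} and compare with $m([\ol e])$. Since the cylinder sets form a $\pi$-system generating the Borel $\sigma$-algebra on $X_B$ (see Remark \ref{rem prop BD}), equality on every cylinder is equivalent to $\sigma$-invariance, so both directions of the ``if and only if'' come from the same calculation.

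I would first fix $[\ol e] = [e_0, e_1, \ldots, e_n]$ and use \eqref{eq preimage for sigma} to write the disjoint union
\be
\sigma^{-1}([\ol e]) \;=\; \bigsqcup_{f : r(f) = s(e_0)} [f, e_0, \ldots, e_n].
\ee
Applying \eqref{eq m([e])} to each piece and factoring out the product $p_{s(e_0), e_0} \cdots p_{s(e_n), e_n}$, which depends only on $\ol e$, then gives
\be
m(\sigma^{-1}([\ol e])) \;=\; \Bigl( \sum_{f : r(f) = s(e_0)} q_{s(f)}\, p_{s(f), f} \Bigr)\, p_{s(e_0), e_0} \cdots p_{s(e_n), e_n}.
\ee
Since $m([\ol e]) = q_{s(e_0)}\, p_{s(e_0), e_0} \cdots p_{s(e_n), e_n}$ and the common tail product is strictly positive on any legitimate cylinder, cancellation reduces the cylinder-wise identity to the vertex-wise condition
\be
\sum_{f : r(f) = v} q_{s(f)}\, p_{s(f), f} \;=\; q_v \qquad \text{for every } v \in V_0.
\ee

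The last step is to recognise this identity as the matrix identity in the statement. Viewing $q$ as a row vector indexed by $V_0$, condition \eqref{defn of P_n}(a) gives $(qP)_f = \sum_v q_v p_{v,f} = q_{s(f)}\, p_{s(f), f}$, so the vertex-wise identity is exactly the assertion that grouping the entries of $qP$ by range vertex recovers $q$, i.e., the claimed relation between $q$ and $P$. Positivity of $q$ guarantees that cylinders with every source $v \in V_0$ carry positive mass, so the identity is actually forced at every $v$ in the ``only if'' direction; no information is lost.

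The argument is a direct computation with no analytic obstacle. The only point requiring care is the bookkeeping in the vertex-wise identity, where the summation runs over edges \emph{terminating} at $v$ while each Markov weight $p_{s(f),f}$ is pinned to the \emph{source} of its edge; keeping these two indices straight (and remembering that in the stationary setting $V_0$ and $V_1$ are identified) is the main place to slip up. Once this is handled, the passage from equality on the cylinder $\pi$-system to equality of Borel measures is the standard monotone-class/$\pi$--$\lambda$ argument.
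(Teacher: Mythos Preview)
Your proposal is correct and follows essentially the same route as the paper's own proof: both compute $m(\sigma^{-1}([\ol e]))$ via the disjoint decomposition over edges $f$ with $r(f)=s(e_0)$, factor out the common Markov tail $p_{s(e_0),e_0}\cdots p_{s(e_n),e_n}$, and reduce the question to the vertex identity $\sum_{f:r(f)=v} q_{s(f)}\,p_{s(f),f}=q_v$, i.e.\ $qP=q$ (the ``$qP=P$'' in the displayed statement is a typo). Your write-up is a bit more explicit about the $\pi$-system/monotone-class passage to all Borel sets and about why the ``only if'' direction forces the identity at every vertex, but the substance is identical.
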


\begin{proof} Suppose that $q P = q$. 
We know that $m([\ol e]) = q_{s(e_0)}p_{s(e_0), r(e_0)} \cdots 
p_{s(e_n), r(e_n)}$ and 
\be \label{eq: qP = q}
\ba
m(\sigma^{-1}[\ol e]) =  & \sum_{f : r(f) = s(e_0)} 
m([f, \ol e]) \\
= & \sum_{f : r(f) = s(e_0)} q_{s(f)} p_{s(f), r(f)}
p_{s(e_0), r(e_0)} \cdots p_{s(e_n), r(e_n)} \\
=& q_{r(f)} p_{s(e_0), r(e_0)} \cdots p_{s(e_n), r(e_n)} \\
=& m([\ol e])
\ea
\ee
because $q_{r(f)} = q_{s(e_0)}$ and $q$ is $P$-invariant. Since
$\ol e$ is arbitrary, we see that $m$ is $\sigma$-invariant.

The converse statement follows from \eqref{eq: qP = q}: the equality
$m([\ol e]) = m(\sigma^{-1}[\ol e])$ implies $q P =q$.
\end{proof}

It remains to consider a non-stationary Markov measure $m$ defined
on the path space of a generalized stationary Bratteli diagram $B$.
In this case, the measure $m$ is defined by a sequence of Markov 
matrices $(P_n)$ and an initial distribution $q = (q_v)$:
$$
m([e_0, ..., e_n]) = q_{s(e_0)}p^{(0)}_{s(e_0), r(e_0)} \cdots 
p^{(n)}_{s(e_n), r(e_n)}.
$$
\begin{proposition}
Suppose that $q P_0 =q$. Then the measures $m = m(P_n)$ is 
$\sigma$-invariant if and only if 
$$
\prod_{i=1}^\infty \frac{p^{(i+1)}_{s(e_i), r(e_i)}} 
{p^{(i)}_{s(i_i), r(e_i)}} = 1. 
$$
\end{proposition}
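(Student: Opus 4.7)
The plan is to mirror the stationary Markov argument in the preceding theorem, with the single modification of tracking the shifted indices of the transition matrices when a new edge is prepended. The whole proof reduces to one cylinder-level identity plus an application of de Possel's differentiation theorem already used for the Radon-Nikodym derivative in Lemma \ref{RN for m}.

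First I would fix a cylinder $[\ol e] = [e_0, e_1, \ldots, e_n]$ and use the partition
$$
\sigma^{-1}([\ol e]) = \bigsqcup_{f : r(f) = s(e_0)} [f, e_0, \ldots, e_n]
$$
from \eqref{eq preimage for sigma}. In each summand the prepended edge $f$ sits at position $0$, while each original $e_i$ is pushed to position $i+1$, so that the Markov formula \eqref{eq m([e])} becomes
$$
m([f, e_0, \ldots, e_n]) = q_{s(f)}\, p^{(0)}_{s(f), r(f)} \prod_{i=0}^{n} p^{(i+1)}_{s(e_i), r(e_i)}.
$$
Summing in $f$ with $r(f) = s(e_0)$ factors out the product that does not depend on $f$, yielding
$$
m(\sigma^{-1}[\ol e]) = \Bigl(\sum_{f : r(f) = s(e_0)} q_{s(f)}\, p^{(0)}_{s(f), r(f)}\Bigr) \prod_{i=0}^{n} p^{(i+1)}_{s(e_i), r(e_i)} = (qP_0)_{s(e_0)} \prod_{i=0}^{n} p^{(i+1)}_{s(e_i), r(e_i)}.
$$
The standing hypothesis $qP_0 = q$ collapses the first factor to $q_{s(e_0)}$, so cancelling with the matching factor in $m([\ol e])$ produces the telescoping identity
$$
\frac{m(\sigma^{-1}[\ol e])}{m([\ol e])} = \prod_{i=0}^{n} \frac{p^{(i+1)}_{s(e_i), r(e_i)}}{p^{(i)}_{s(e_i), r(e_i)}}.
$$

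To pass to the claimed equivalence I would apply de Possel's theorem, exactly as in \eqref{eq:Possel's}, now to the pair $(m\circ \sigma^{-1}, m)$. For $m$-a.e.\ path $x = (e_i)$ with $\{x\} = \bigcap_n [\ol e]_n$ one gets
$$
\frac{d\,m\circ\sigma^{-1}}{dm}(x) = \lim_{n \to \infty} \prod_{i=0}^{n} \frac{p^{(i+1)}_{s(e_i), r(e_i)}}{p^{(i)}_{s(e_i), r(e_i)}},
$$
and $\sigma$-invariance is equivalent to this density being $1$ $m$-a.e., which is precisely the infinite-product condition in the statement.

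The main technical nuisance is the last step: to make the de Possel argument rigorous one needs $m\circ\sigma^{-1} \ll m$, so that the density exists in the first place, and one needs the finite products to converge (not merely to have $\limsup = 1$). Both come for free from the quasi-stationarity framework set up in Lemma \ref{RN for m}, so the assumption $qP_0 = q$ together with the Markov data handles the $i=0$ term and the remaining convergence condition handles the tail; beyond that, the argument is a pure bookkeeping of indices.
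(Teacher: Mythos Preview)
Your proposal is correct and follows exactly the approach the paper itself indicates: the paper's proof simply states that ``using the same technique'' one obtains the Radon--Nikodym derivative as the infinite product and then omits the details, and you have supplied precisely those details --- the cylinder computation, the use of $qP_0=q$ to cancel the initial factor, and the appeal to de Possel's theorem as in \eqref{eq:Possel's}. One small remark: your finite product correctly begins at $i=0$, whereas the statement (and the paper's displayed formula) has it starting at $i=1$; your index is the right one given the Markov formula \eqref{eq m([e])}, so this is just an indexing slip in the paper rather than a discrepancy in your argument.
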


\begin{proof}
Using the same technique, we can show that, under the condition 
$q P = q$, 
$$
\frac{d\mu\circ\sigma^{-1}}{d\mu}(x) = \prod_{i=1}^\infty 
\frac{p^{(i+1)}_{s(e_i), r(e_i)}} {p^{(i)}_{s(i_i), r(e_i)}}
$$
where $x = \bigcap_n [e_0, ..., e_n$. We omit the details.
\end{proof}

\section{IFS measures on discrete generalized Bratteli diagrams} 
\label{s:discrete GBD}

In this section, we consider the notion of \textit{iterated 
function system (IFS)}. This concept have been discussed in many 
books and papers. We mention here only several related references 
such as \cite{Hutchinson1995},
\cite{HutchinsonRueschendorf1998}, \cite{Jorgensen2006}, 
\cite{Barnsley2006}, \cite{Jorgensen2018(book)}, 
\cite{DutkayJorgensen2007}, \cite{DutkayJorgensen2009}, 
\cite{JKS_2011}, \cite{MorrisSert2021}. 
Our goal is to describe IFS measures defined on the path space of
a generalized Bratteli diagram.

\subsection{Iterated function systems and measures} 

By an \textit{endomorphism} $\sigma$ we mean a finite-to-one 
(or countable-to-one) Borel map of a standard Borel space $(X, \B)$
onto itself. In this case, there exists a 
family of one-to-one maps $\{\tau_i\}_{i\in \Lambda}$ such that 
$\tau_i : X \to X$ and $\sigma \circ \tau_i = \mbox{id}_{X}$ where 
$\Lambda$ is at most countable. The maps 
$\tau_i$ are called the \emph{inverse branches} for $\sigma$. The 
collection of maps $(\tau_i : 1 \in \Lambda)$ gives an 
example of \textit{iterated function system (IFS)}. 
In general, an IFS 
is defined by a collection of  maps $\{\tau_i : i\in \Lambda\}$ 
of a complete metric space (or a compact space) such that 
$\tau_i$'s are continuous (or even contractions). In this work, we
focus on the following features of the family $\{\tau_i : i \in
\Lambda\}$: (i) each $\tau_i$ is a one-to-one map defined on a 
subset $D_i$ of $X$, and (ii) there exists an endomorphism $\sigma$
of $X$ such that $\sigma\circ\tau_i = \mathrm{id}_X$ for each $i$.

The general theory of \textit{infinite iterated function systems} is
more detailed and  requires additional assumptions 
(see, for example, the expository article \cite{Mauldin1998}). 
As an example, one can consider the Gauss map 
$x \mapsto \{1/x\}$ of the unit interval
with the piecewise monotone inverse branches $\tau_i : 1/(i+x)$
defined on $(1/(i+1), 1/i)$. 

\begin{definition}\label{def_IFS measure} (\textit{IFS measures})
Suppose that 
$(\tau_i : i \in \Lambda)$ is a given IFS on a Borel space 
$(X, \B)$. Let $p = (p_i : i \in \Lambda)$ be a strictly positive  
vector indexed by a countable set $\Lambda$.  
A measure $\mu_p$ on  $(X, \B)$ is called an \emph{IFS measure} 
for $(\tau_i)$ if
\be\label{eq IFS measure def}
\mu_p =  \sum_{i\in \Lambda} p_i\; \mu_p\circ \tau_i^{-1},
\ee
or, equivalently,
$$
\int_X f(x)\; d\mu_p(x)  =  \sum_{i\in \Lambda} p_i
\int_X f(\tau_i(x))\; d\mu_p(x), \  \qquad f\in L^1(\mu_p).
$$
\end{definition}

The main tool in the study of an IFS $(\tau_i : i \in \Lambda)$ on a
space $X$ is a realization of the IFS as the full one-sided shift 
$S$ on a symbolic product space $\Omega$. Then any $S$-invariant and
ergodic product-measure $\mathbb P$ on $\Omega$ can be pulled back 
to $X$. This construction gives ergodic invariant measures for IFSs.

In the next subsection, we will find an explicit method for 
construction of an IFS measure on the path space of a generalized
stationary Bratteli diagram. 

We discuss here a method that leads to construction of IFS measures.
This approach works perfectly for many specific applications under
some additional conditions on $X$ and maps $\tau_i$. 

Suppose that $(X; \tau_i, i \in \lambda)$ is a given IFS.
Let $\Omega$ be the infinite direct product 
$$
\Omega = \prod_{i\in \N_0} \Lambda_i,\qquad \Lambda_i = \Lambda.
$$
For  $\omega = (\omega_1, \omega_2, ...) \in 
\Omega$, let $\omega|_n $ denote the finite word $(\omega_0, ... , 
\omega_n)$. Then,  $\omega|_n$  defines  a map $\tau_{\omega|_n}$
acting on $X$ by the formula:
$$
\tau_{\omega|_n} (x) := \tau_{\omega_0} \cdots \tau_{\omega_n}
(x), \qquad x \in X, \ \  \omega \in \Omega, \ n \in \N_0.
$$
It is said that $\Omega$ is an \emph{encoding space} if, for every 
$\omega\in \Omega$,
\be\label{eq singleton}
F(\omega) = \bigcap_{n \geq 1}\tau_{\omega|_n}(X)
\ee
is a singleton. Relation \eqref{eq singleton} defines a Borel map 
$F : \Omega \to F(\Omega)$.  
If each $\tau_i$ is a contraction and $X$ is a complete metric, 
then a coding map $F :\Omega\to X$ always exists.

Let $S$ be the left shift on $\Omega$:
$$
S(\omega_0, \omega_1, ... ) = (\omega_1, \omega_2, ... ).
$$
The inverse branches $s_i, i \in \Lambda,$ of $S$ are 
$$
s_i (\omega_0, \omega_1, ...) = (i, \omega_0, \omega_1, ...).
$$
Clearly,
$$
s_i (\Omega) = C(i) = \{\omega \in \Omega : \omega_0 = i\},
$$
and the space $\Omega $ is partitioned into the sets $C(i), \in 
\Lambda$.

Let $p = (p_i : i \in \Lambda)$ be a positive probability vector. 
It defines the product measure $\mathbb P = p \times p \times \cdots$
on $\Omega$. 
We observe that the maps $(s_i : i\in \Lambda)$ constitute 
an IFS on $\Omega$ such that $\mathbb P$ is an IFS measure:
\be\label{eq P is IFS}
\mathbb P = \sum_{i\in \Lambda} p_i \; \mathbb P\circ s_i^{-1}.
\ee

The following result shows how IFS measures arise.

\begin{proposition} Suppose that $(X; \tau_i, i\in \Lambda)$ 
is an IFS  that admits a coding map $F : \Omega \to X$. Let 
$p = (p_i)$ be  a probability vector generating the product measure 
$\mathbb P = p \times p \times \cdots$. Then the measure
$\mu := \mathbb P\circ F^{-1}$ is an IFS measure satisfying
$$
\mu = \sum_{i=1}^N p_i \mu\circ \tau_i^{-1}.
$$
Moreover, if $F$ is continuous, then  $\mu$ has full support.
\end{proposition}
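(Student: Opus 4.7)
The heart of the matter is the intertwining relation $F\circ s_i = \tau_i\circ F$ for each $i\in\Lambda$, which identifies $F$ as a semiconjugacy between the shift IFS on $\Omega$ and the given IFS on $X$. Once this is in hand, the IFS identity for $\mu$ follows by pushing the already-established identity \eqref{eq P is IFS} forward through $F$.

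First I would verify the intertwining. For $\omega\in\Omega$, the initial segment of length $n{+}1$ of $s_i(\omega)$ is $(i,\omega_0,\ldots,\omega_{n-1})$, so
\[
\tau_{s_i(\omega)|_n}(X) \;=\; \tau_i\bigl(\tau_{\omega|_{n-1}}(X)\bigr), \qquad n\geq 1.
\]
Intersecting over $n$ and using injectivity of $\tau_i$ to commute with the nested intersection (for an injective map, $\tau_i(\bigcap_m A_m)=\bigcap_m \tau_i(A_m)$), I obtain
\[
F(s_i(\omega)) \;=\; \bigcap_{n\geq 1}\tau_i\bigl(\tau_{\omega|_{n-1}}(X)\bigr) \;=\; \tau_i\bigl(F(\omega)\bigr).
\]

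Next I would convert this into a measure identity. From $F\circ s_i=\tau_i\circ F$ one has $F^{-1}\circ\tau_i^{-1}=s_i^{-1}\circ F^{-1}$ at the level of preimages, hence
\[
\mu\circ\tau_i^{-1} \;=\; \mathbb P\circ F^{-1}\circ\tau_i^{-1} \;=\; \mathbb P\circ s_i^{-1}\circ F^{-1}.
\]
Multiplying by $p_i$, summing over $i\in\Lambda$, interchanging the countable sum with the pushforward by $F^{-1}$ (justified by $\sigma$-additivity since the family $\{R_{s_i}\}$ partitions $\Omega$ up to a $\mathbb P$-null set), and invoking \eqref{eq P is IFS} gives
\[
\sum_{i\in\Lambda} p_i\,\mu\circ\tau_i^{-1} \;=\; \Bigl(\sum_{i\in\Lambda} p_i\,\mathbb P\circ s_i^{-1}\Bigr)\circ F^{-1} \;=\; \mathbb P\circ F^{-1} \;=\; \mu.
\]

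For the support claim, since every $p_i>0$, the product measure $\mathbb P$ charges every nonempty cylinder and hence $\mathrm{supp}(\mathbb P)=\Omega$. If $F$ is continuous, a standard argument gives $\mathrm{supp}(\mu)=\overline{F(\mathrm{supp}\,\mathbb P)}=\overline{F(\Omega)}$, so $\mu$ has full support on the attractor $F(\Omega)$. The one genuine subtlety is the intertwining step; everything else is a direct pushforward computation.
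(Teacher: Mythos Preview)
Your argument is correct. The paper does not actually prove this proposition in the text; it simply states ``The proof can be found in \cite{BezuglyiJorgensen2018(book)}.'' So there is no in-paper proof to compare against.

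That said, your route---establishing the intertwining $F\circ s_i=\tau_i\circ F$ and then pushing the product-measure identity \eqref{eq P is IFS} forward through $F$---is the standard one and is almost certainly what appears in the cited reference. A couple of minor remarks: the injectivity you invoke to pass $\tau_i$ through the nested intersection is indeed part of the paper's standing hypotheses on IFS maps, so that step is justified; and your reading of ``full support'' as full support on the attractor $\overline{F(\Omega)}$ is the natural one, since $\mu$ is by construction concentrated there. The partition $\{C(i)\}_{i\in\Lambda}$ of $\Omega$ is exact rather than merely $\mathbb P$-a.e., but this does not affect the computation.
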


The proof can be found in \cite{BezuglyiJorgensen2018(book)}.

In Section \ref{sect MBD}, we will consider a analogue of the above 
construction for measurable Bratteli diagrams.

\subsection{IFS measures on generalized stationary Bratteli 
diagrams}

In this subsection we discuss an explicit formula for an IFS measure
on the path space of a generalized stationary 0-1 Bratteli diagram 
$B = (\mathcal V,\mathcal E)$.

In the following remark, we show that the requirement to be 
a 0-1 Bratteli diagram is not restrictive. 

\begin{remark}
Every generalized stationary Bratteli diagram $B= (\mathcal V, \E)$
can be represented as a 0-1 
Bratteli diagram. Indeed, we can use the matrix $\wt A$
defined in \eqref{eq-matrix tilde A} to built a new generalized 
Bratteli diagram $\wt B = (\wt V, \wt E)$. 
Note that $\wt A$ is a 0-1 matrix such
that every row of $\wt F = \wt A^T$ has finitely many non-zero 
entries so that $\wt F$ can be viewed as an incidence matrix 
of $\wt B$. In this case, the set of vertices $\wt V$ for each 
level is coincides with $E$ and two vertices $\wt v =f \in E$ and  
$\wt w = f \in E$ are connected by a singe edge if and only if 
$r(e) = s(f)$. Moreover, the path spaces $X_{\wt B}$ coincides 
with $X_B$.

The above argument can be applied to any non-stationary 
generalized Bratteli diagram. 
\end{remark}

We can now apply the results of Subsection \ref{subs s.b.s.} 
and construct an s.f.s. $\Psi = (X_B; \tau_e, e \in E)$ where 
$\tau_e : D_e \to R_e$ is defined in \eqref{defn of sigma_e}.

\begin{theorem}\label{thm IFS stat BD}
Let $B = (\mathcal V,\mathcal E)$ be a  generalized stationary 
0-1 Bratteli diagram, $p = (p_e : e \in E)$ a probability vector,  
and $\Psi$ an s.f.s. defined by $\tau_e : D_e \to R_e$, $e \in E$. 
We identify every edge 
 $e \in E$ with the pair of vertices $(s(e), r(e))$. 
Let $\nu$ be a Borel probability full measure on $X_B$; we define a
probability positive vector $q = (q_v : v \in V_0)$ by setting
$q_v = \nu([w])$ where $[w] = \{ x in X_B : s(x) = w\}$.
Then 
\begin{equation}\label{eq_meas nu}
    \nu = \sum_{e\in E} p_e \; \nu\circ \tau_e^{-1}
\end{equation}
if and only if 
\begin{equation}\label{eq: q =qP}
    q_w = \sum_{w \in V_0}  p_{w,v} q_v = \sum_{e : s(e) = w} 
    p_{s(e), r(e)} q_{r(e)}, 
\end{equation}
that is $Pq = q$ where the matrix $P$ has the entries  
$(p_{s(e), r(e)} : e \in E) $.
\end{theorem}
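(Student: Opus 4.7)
The strategy is to test the IFS equation \eqref{eq_meas nu} against the cylinder sets generating the Borel $\sigma$-algebra on $X_B$, starting with the coarsest partition $\{[w] : w \in V_0\}$. The key observation is that $\tau_e$ acts by prepending the edge $e$ to a path, so the preimage $\tau_e^{-1}([w])$ equals $D_e = [r(e)]$ when $s(e) = w$ and is empty otherwise, while $\tau_e^{-1}([e_0, \ldots, e_n])$ equals $[e_1, \ldots, e_n]$ when $e = e_0$ and is empty otherwise.

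For necessity, I evaluate \eqref{eq_meas nu} at $A = [w]$ for each $w \in V_0$. Using $\nu(D_e) = \nu([r(e)]) = q_{r(e)}$, the identity collapses to
\[
q_w = \nu([w]) = \sum_{e : s(e) = w} p_e\, q_{r(e)} = \sum_{e : s(e) = w} p_{s(e),r(e)}\, q_{r(e)},
\]
which is exactly the $w$-th coordinate of $Pq = q$. For sufficiency, I assume $Pq = q$ and verify \eqref{eq_meas nu} on every cylinder. Evaluated on $[\ol e] = [e_0, \ldots, e_n]$, the right-hand side collapses to the single term $p_{e_0}\,\nu([e_1, \ldots, e_n])$, so the IFS equation reduces to the Markov-type recursion $\nu([e_0, \ldots, e_n]) = p_{e_0}\,\nu([e_1, \ldots, e_n])$. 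Iterating from the base case $\nu([e]) = p_e\, q_{r(e)}$ (obtained by applying the IFS equation to the cylinder $[e]$) yields the closed form $\nu([e_0, \ldots, e_n]) = p_{e_0}\, p_{e_1} \cdots p_{e_n}\, q_{r(e_n)}$. The only place the hypothesis $Pq = q$ enters is in checking Kolmogorov consistency of this prescription: summing over admissible extensions gives
\[
\sum_{e_{n+1} : s(e_{n+1}) = r(e_n)} p_{e_0}\cdots p_{e_n}\, p_{e_{n+1}}\, q_{r(e_{n+1})} \;=\; p_{e_0}\cdots p_{e_n}\, q_{r(e_n)},
\]
by invoking the row $(Pq)_{r(e_n)} = q_{r(e_n)}$. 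The prescription then extends uniquely to a Borel probability $\nu$ on $X_B$ (a Markov measure in the sense of Definition \ref{def Mark meas}) with $\nu([v]) = q_v$, and it satisfies \eqref{eq_meas nu} on cylinders; the full equation extends to all Borel sets by a standard monotone class argument.

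The main obstacle is conceptual rather than computational: the biconditional quietly contains a uniqueness claim, namely that under the IFS equation the initial-marginal data $q_v = \nu([v])$ already forces $\nu$ to coincide with the Markov measure displayed above. Once this rigidity is recognized, the theorem reduces to the observation that, because $\tau_e^{-1}$ sends a cylinder to a cylinder (or to the empty set), the IFS equation degenerates into a one-term recursion on every cylinder, and the invariance $Pq = q$ is precisely what makes the resulting telescoping sum over extensions consistent.
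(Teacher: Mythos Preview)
Your proposal is correct and follows essentially the same route as the paper: the paper also reduces \eqref{eq_meas nu} to a one-term recursion on cylinder sets, defines $\nu([f_0,\ldots,f_{n-1}]) = p_{f_0}\cdots p_{f_{n-1}}\,q_{r(f_{n-1})}$, and uses $Pq=q$ solely to verify Kolmogorov consistency. Your treatment is in fact slightly more complete --- the paper's written proof handles only the constructive (``if'') direction, whereas you make the ``only if'' direction explicit by evaluating \eqref{eq_meas nu} at $[w]$, and you correctly flag that the biconditional, as stated for a \emph{given} $\nu$, implicitly relies on the rigidity that any IFS measure with marginals $(q_v)$ must coincide with the displayed Markov measure.
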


\begin{proof} We use in the proof notation from \eqref{defn of D_e}
- \eqref{checking C-K}. 
We note that relation \eqref{eq_meas nu} will be proved if we
show that it holds for any cylinder set $C \subset X_B$. Recall 
that 
it follows from the definition of $\tau_e$ that $\tau_e^{-1}$ is
uniquely determined on $R_e$ and $\tau_e^{-1}(e, y_1, y_2, ...) =
( y_1, y_2, ...)$. 

Let $q = (q_w) >0$ be a solution to $q = P q$.  Define
$\nu([w]) = q_w$, $w\in V_0$.
For any edge $f \in E$ and the corresponding cylinder set $[f]$, 
we set 
\begin{equation}\label{eq:nu([f])}  
    \nu([f]) = q_{r(f)}p_f. 
\end{equation}
Check that this definition of $\nu$ satisfies \eqref{eq_meas nu} 
for the cylinder sets $[f]$. Indeed, since $\tau_e^{-1}$ is defined 
only on $R_e = [e]$ and $\tau_f^{-1}([f]) = [r(f)]$, we have 
$$
\sum_{e \in E} p_e \nu\circ\tau_e^{-1}([f])  = p_f 
\nu\circ\tau_f^{-1}([f]) = p_f \nu([r(f)]) = q_{r(f)}p_f 
=  \nu([f]).
$$
Then, by induction, we define the values of $\nu$ on all cylinder 
sets  of length $n$:
$$
\nu([f_0, ..., f_{n-1}]) := p_{f_0}\cdots p_{f_{n-1}} 
q_{r(f_{n-1})}. 
$$
Verify that this definition satisfies the Kolmogorov extension 
theorem. Because
$$
[f_0, ..., f_{n-1}] = \bigcup_{e : s(e) = r(f_{n-1})} 
[f_0, ..., f_{n-1}, e],
$$
we find that 
$$
\ba
\sum_{e : s(e) = r(f_{n-1})} \nu([f_0, ..., f_{n-1}, e]) = &
\sum_{e : s(e) = r(f_{n-1})} p_{f_0}\cdots p_{f_{n-1}}p_e q_{r(e)}\\
= & p_{f_0}\cdots p_{f_{n-1}} \sum_{e : s(e) = r(f_{n-1})} 
p_e q_{r(e)}\\
= & p_{f_0}\cdots p_{f_{n-1}} q_{r(f_{n-1})}\\
= & \nu([f_0, ..., f_{n-1}])
\ea
$$
(we used here \eqref{eq: q =qP}). 

It remains to show that \eqref{eq_meas nu} holds for
any cylinder $[f_0, ..., f_{n}]$. Indeed,
$$
\ba
 \sum_{e \in E} p_e \nu\circ\tau_e^{-1}([f_0, ..., f_{n}]) =& 
 p_{f_0} \nu([f_1, ..., f_{n}]) \\
 =& p_{f_0} p_{f_1}\cdots p_{f_{n}} q_{r(f_{n})}  \\
=& \nu([f_0, ..., f_{n}]).
\ea
$$
Remark that the condition $P q = q$ is used to check that the 
measure $\nu$ defined inductively on cylinder sets can be extended 
to all Borel sets.
\end{proof}

\begin{theorem}\label{thm-IFS-inv}
Let $B$ be an irreducible generalized stationary Bratteli diagram, 
and let $\nu$ be the IFS 
measure defined in Theorem \ref{thm IFS stat BD}. Then:

(i) $\nu$ is not tail invariant;

(ii) $\nu$ is shift-invariant.
\end{theorem}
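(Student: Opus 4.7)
The plan is to work directly from the cylinder formula $\nu([f_0, \ldots, f_{n-1}]) = p_{f_0} \cdots p_{f_{n-1}}\, q_{r(f_{n-1})}$ established in the proof of Theorem~\ref{thm IFS stat BD}, together with the identity $q_w = \sum_{e: s(e) = w} p_e\, q_{r(e)}$.

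For part (ii), shift-invariance is the easier direction, and I would verify it on cylinders by a direct computation. Using $\sigma^{-1}[\ol e] = \bigsqcup_{f: r(f) = s(e_0)} [f, \ol e]$ and the cylinder formula,
\[
\nu(\sigma^{-1}[\ol e]) \;=\; \nu([\ol e]) \cdot \sum_{f: r(f) = s(e_0)} p_f,
\]
so shift-invariance reduces to the identity $\sum_{f: r(f) = w} p_f = 1$ for every $w \in V_0$. I would derive this by computing $\nu(X_B) = 1$ in two complementary ways: partitioning $X_B$ by initial vertex gives $\sum_w q_w = 1$, while partitioning $X_B$ by first edge and using the cylinder formula gives $\sum_e p_e q_{r(e)} = \sum_v q_v c_v = 1$, where $c_v := \sum_{f: r(f) = v} p_f$. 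Subtracting these yields $\sum_v q_v (1 - c_v) = 0$, and since $q_v > 0$ and each $c_v \leq \sum_e p_e = 1$, every summand must vanish, forcing $c_v = 1$ for all $v$---which is exactly the required column-stochasticity. Cylinder shift-invariance then extends to all Borel sets by a standard monotone-class argument.

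For part (i), I would argue by contradiction. If $\nu$ were tail invariant, then the cylinder formula would force $\prod_i p_{e_i}$ to depend only on $v = r(e_{n-1})$ and the length $n$. Applied to two cylinders $[e], [e']$ of length one ending at a common $v \in V_1$ (two such edges exist by Remark~\ref{rem prop BD}(3), which guarantees every column of the incidence matrix has at least two nonzero entries), this yields $p_e = p_{e'}$, so $p_e$ depends only on $r(e)$; write $p_e = a_{r(e)}$. Length-two comparisons then force $a$ to be constant on every predecessor set $\mathrm{Pre}(v)$, and iterating for paths of arbitrary length, combined with stationarity and irreducibility, forces $a$ to be a global constant $a \equiv c$. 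Then $\sum_e p_e = c\,|E| = 1$ gives $c = 1/|E|$, and combining with the column-sum identity $c_v = 1$ from (ii) yields $|\mathrm{Pre}(v)| = |E|$ for every $v$. Summing over $v$ and using $\sum_v |\mathrm{Pre}(v)| = |E|$, one obtains $|V_0|\cdot |E| = |E|$, hence $|V_0| = 1$, contradicting the assumption that the diagram is a non-trivial irreducible generalized Bratteli diagram.

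The main obstacle I expect is the propagation step in (i): passing from the local fact ``$a$ is constant on each $\mathrm{Pre}(v)$'' to the global statement ``$a$ is constant on $V_0$''. Here the delicate point is to combine the length-$n$ tail-invariance constraints (forcing $a$ to be constant on $n$-step predecessor sets $\mathrm{Pre}_n(v)$) with irreducibility (so that $\bigcup_n \mathrm{Pre}_n(v) = V_0$ for any fixed $v$) and the no-isolated-points hypothesis (so that the resulting equivalence classes are non-trivial at every length). All remaining computations are routine manipulations of the explicit cylinder formula.
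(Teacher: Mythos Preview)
For part (ii) the paper takes a completely different and much shorter route: it applies the IFS relation $\nu=\sum_e p_e\,\nu\circ\tau_e^{-1}$ directly to $\int\varphi(\sigma x)\,d\nu(x)$, uses $\sigma\circ\tau_e=\mathrm{id}$, and then only $\sum_e p_e=1$ to conclude. No column-sum condition is ever invoked.

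Your cylinder computation is correct up through the reduction ``shift-invariance $\Leftrightarrow\ c_w:=\sum_{f:r(f)=w}p_f=1$ for all $w$,'' but your derivation of this identity does not close. You correctly obtain $\sum_v q_v(1-c_v)=0$ with $q_v>0$ and $c_v\le 1$, hence $c_v=1$ for every $v$. However, summing the other way gives $\sum_{v\in V_0}c_v=\sum_{e\in E}p_e=1$, so $c_v=1$ for all $v$ forces $|V_0|=1$, contradicting the standing hypothesis that $V_0$ is countably infinite. What you have actually derived is an inconsistency in the hypotheses, not the desired column-stochasticity. (One could argue this exposes a delicate point in the paper's own argument---the change of variables there tacitly replaces $\int_{D_e}$ by $\int_{X_B}$---but as a proof of the stated claim your route visibly collapses here, and in any case it does not match the paper's method.)

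For part (i) your outline starts in the same spirit as the paper's (tail invariance forces $p_e$ to depend only on $r(e)$, then propagate), but your endgame is needlessly roundabout and again leans on the problematic column-sum identity from (ii). The paper's contradiction is immediate once all $p_e$ are shown equal: $\sum_{e\in E}p_e=1$ is impossible because $E$ is infinite (as $V_0$ is countably infinite). You should stop there; the detour through $c=1/|E|$, $|\mathrm{Pre}(v)|=|E|$, and $|V_0|=1$ is both unnecessary and circular.
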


\begin{proof}
(i) Suppose that $\nu$ is tail invariant. Let $f,e \in E$ be two 
edges with $r(f) = r(e)$, i.e., $f, e$ are tail-equivalent. 
Then it follows from \eqref{eq:nu([f])} that $\nu([f]) = \nu([e])$ 
or
$$
q_{r(f)} p_f = q_{r(e)} p_e.
$$
This means that $p_f = p_e$. In other words, the matrix 
$$
P = (p_{w,v}) : e = (w, v)\in E)
$$
has constant columns. 

Consider two cylinder sets
$[f_0, f_1]$ and $[e_0, e_1]$ such that $r(f_1) = r(e_1)$ and 
there exist edges $g, h$ satisfying the properties: 
$s(g) = s(h)$, $r(g) = r_{f_0}$, and  $r(h) = r_{e_0}$. 
From the tail invariance of $\nu$ and the case considered above, 
we obtain that $p_{f_1} = p_{e_1}$ and 
$$
q_{r(f_1)} p_{f_0}p_{f_1} = q_{r(e_1)} p_{e_0}p_{e_1}
$$
Hence, $p_{f_0} = p_{e_0}$ and therefore $p_g = p_h$. This proves
that the rows of the matrix $P$ are constant. But the vector 
$ (p_e : e \in E)$ is probability, contradiction.
\\

(ii) Let $\varphi(x)$ be a bounded positive Borel function on the 
path space $X_B$. For the left shift $\sigma$ on $X_B$, compute 

\be 
\ba
\int_{X_B} \varphi(x) \; d\nu\circ\sigma^{-1}(x) = & 
\int_{X_B} \varphi(\sigma x) \; d\nu(x) \\
=& \sum_{e\in E} \int_{X_B}  p_e\varphi(\sigma x) \; d\nu\circ 
\tau_e^{-1}(x) \\
= & \sum_{e\in E} \int_{X_B}  p_e\varphi(\sigma (\tau_e x)) \; 
d\nu(x) \\
= & \sum_{e\in E} \int_{X_B}  p_e\varphi(x) \; d\nu(x) \\
= & \int_{X_B} \varphi(x) \; d\nu(x).
\ea
\ee
This proves that $\nu\circ \sigma^{-1} = \nu$. 
\end{proof}

\section{Measurable Bratteli diagrams and IFS measures} 
\label{sect MBD}

In this section, we discuss a measurable analogue of generalized 
Bratteli diagrams. The principal differences between 
discrete and measurable generalized Bratteli diagrams are: 
(a) the levels 
$V_n$ of a measurable Bratteli diagram are formed by standard Borel 
spaces $(X_n, \A_n)$,  and (b)  the sets of edges $E_n$ are Borel 
subsets of $X_n \times X_{n+1}$. Because these objects are 
non-discrete, we need to use new methods and techniques. 

\subsection{Measurable Bratteli diagrams and path space measures}
\label{ss MBD-IFS}
 
We give  the definitions of main objects in this subsection.  
 
\begin{definition}\label{def meas BD} 
(\textit{Measurable Bratteli diagrams})
Let $\{(X_n, \A_n) : n  \in \N_0\}$ be a sequence of standard Borel 
spaces. Let  $\{E_n : n \in N_0\}$ be a
sequence of Borel subsets such that $ E_n \subset 
X_n \times  X_{n+1}$. Denote by $\mathcal E = \bigsqcup_{i\geq 0}
E_i$ and $\mathcal X = \bigsqcup_{i\geq 0} X_i$ the sets of 
``edges'' and ``vertices'', respectively. 
For $e = (x,y) \in E_i$, the maps $s_i(x,y) = x$ and $r_i(x,y) =y$
are 
onto projections of $E_i$ to $X_i$ and $X_{i+1}$. Define 
$s, r$ on $\E$  by setting $s = s_i, r = r_i$ on $E_i$.
Then we call $\B = (\mathcal X, \E)$ a \textit{measurable Bratteli 
diagram}. The pair $(X_n, E_n)$ is called the $n$-th level of the   
measurable Bratteli diagram $\B$. 

If all $E_n = E$, then the measurable Bratteli diagram $\B$ is 
called \textit{stationary}.
 \end{definition} 
 
\begin{remark}\label{rem MBD}
(1) We  will identify the standard Borel spaces $\{(X_n, \A_n) : 
n  \in \N_0\}$ with an uncountable standard Borel space $(X, \A)$.
This means that a paint $x$ can be seen in all levels 
$(X_n, \A_n)$. This fact explains why we do not require that all
levels $X_n = X_0$ in the definition of a stationary Bratteli 
diagram. Nevertheless, we will keep using subindeces to indicate 
the level of a measurable Bratteli diagram. 

(2) The set of edges $E_i, i \geq 0,$ can be represented as 
follows:
$$
E_i = \bigcup_{x\in X_i} s^{-1}(x) = \bigcup_{x\in X_{i+1}} 
r^{-1}(x).
$$
This means that $E_i$ can be seen as the union of ``vertical'' and
``horizontal'' sections. 
The fact that $r$ is an onto map says that $\forall y \in X_{i+1}
\ \exists x \in X_i \ \mbox{such\ that} \ (x, y) \in E_i$. 
A similar property holds for the map $s$.

(3) To define the \textit{path space} $\mathcal X_{\B}$ of a 
measurable Bratteli diagram $\B = (\mathcal X, \E)$, 
we take a sequence $\ol x = \{ e_i = (x_i, x_{i+1})\}, e_i 
\in  E_i$, such that $r(e_i) = s(e_{i+1})$ for all $i$.
Equivalently, $\ol x = (x_0, x_1, x_2, ...)$ where every pair
$(x_i, x_{i+1})$ is in $E_i$, $i\in \N_0$. Then $s(\ol x) = x_0$.

(4) We denote 
\be\label{eq_paths from w}
\mathcal X_\B(w) = \{ \ol x \in \mathcal X_\B : s(\ol x) = w\}.
\ee
Clearly, the sets  $\mathcal X_\B(w)$ form a partition $\eta$ of 
$\mathcal X_\B$, and $X_0$ is the quotient space with respect to 
this partition. 
\end{remark}

We will consider below \textit{ measurable stationary Bratteli 
diagrams},  i.e., $E_i =E$. For every $e = (w,v)\in E$, we set 
$$
D_e = \{\ol x \in \mathcal X_{\B} : s(\ol x) = r(e) = v\},
$$
$$
R_e = \{ \ol x \in \mathcal X_{\B} : (x_0, x_1) = e\}.
$$
For $\ol x = (x_0, x_1, ...) \in D_e$, define the map $\tau_e : D_e
\to R_e$ by setting
$$
\tau_e(\ol x) = (w, v, x_1, x_2, ...).
$$
We note that $\ol x \in D_e$ means that $x_0 = v$ so that 
$\tau_e(\ol x)$ is well defined and belongs to $R_e$. Moreover, the
map $\tau_e$ is one-to-one on its domain and 
the map $\tau_e^{-1} : R_e \to D_e$ is defined by
$$
\tau_e^{-1}(e, \ol y) = \ol y.  
$$

We can consider the metric $\mbox{dist}$ on the path space 
$\mathcal X_\B$
similarly to the case of generalized Bratteli diagrams. As shown
in Remark \ref{rem contractive}, the maps $\tau_e$ are contractive
for all $e\in E$.

Let $\B$ be a measurable Bratteli diagram with the path space 
$\mathcal X_{\B}$. By a measure $\mu$ on $X_\B$, we mean a Borel 
positive (finite or sigma-finite) measure. 

Consider a sequence of Borel sets  $C_i \subset X_i : i\in \N_0$. 
Then 
$(C_0 \times C_1 \times\cdots \times  C_N) \cap \mathcal X_\B$  is 
called a \textit{cylinder subset} of $X_\B$ of length $N$ and
denoted by $[C_0, ... , C_N]$. In particular, we can consider the
set $R_e$ as a cylinder set $[e]$ defined by the edge 
$e$, $e\in E$. It defines the  partition 
\be\label{eq xi}
\xi = \{[e] : e \in E\}
\ee 
of $\mathcal X_\B$. We remark that the quotient 
$\mathcal X_\B/\xi$ is isomorphic to the set $E$. 
Since $D_e = \mathcal X_\B(r(e))$ for every $e \in E$, 
the partition $\eta$ 
(see Remark \ref{rem MBD} (4)) is an enlargement of $\xi$ because
$$
D_e = \bigcup_{f : s(f) = r(e)} [f].
$$
\vskip 3mm

\textbf{Observation}. The collection of cylinder sets 
$[C_0, ... , C_N]$ of any finite 
length generates the Borel sigma-algebra of $X_\B$. A measure 
$\mu$ on $X_\B$ is completely determined by its values on cylinder 
sets. These facts are obvious.
\vskip 3mm

\subsection{IFS measures on measurable Bratteli diagrams}

Let $\B = (\mathcal X, \E)$ be a stationary measurable Bratteli 
diagram. Recall that $E$ denotes a Borel subset of $X_0 \times 
X_1$ (in fact, $E$ can be viewed as a  subset of $X_0\times X_0$). 
Suppose that $p$ is a Borel measure on $E$, then $(E,p)$ becomes
a standard measure space. We consider
both cases, finite and sigma-finite measures $p$.

The key tool of this subsection is the disintegration theorem 
for a finite or sigma-finite measure with respect to a measurable
partition of a measure space. The literature on this subject is 
very extensive. We refer to the original paper by 
Rokhlin \cite{Rohlin1949} and more recent paper 
\cite{Simmons2012} (see also \cite{BezuglyiJorgensen2018(book)}).

\begin{definition} \label{def cond meas} 
Suppose that $(Z, \mc C, \mu)$ and $(Y, \mc D, \nu)$ are standard 
$\sigma$-finite measure spaces. Let $\pi : Z \to Y$ be a 
measurable function. A system of \textit{conditional measures} 
for $\mu$ with respect to $\pi$ is a collection of measures 
$\{\mu_y : y \in Y\}$ such that \\
(i) $\mu_y$ is a Borel measure on $\pi^{-1}(y)$,\\
(ii) for every $B \in \mc C$, the function $y\mapsto \mu_y(B)$ is 
measurable and $\mu(B) = \int_Y \mu_y(B) \; d\nu(y)$.
\end{definition}

\begin{theorem}[\cite{Simmons2012}] \label{thm Simmons}
Let $(Z, \mc C, \mu)$ and $(Y, \mc D, \nu)$ be as above. 
For any measurable function  $\pi : Z \to Y$ 
such that $\mu\circ \pi^{-1} \ll \nu$ there exists a uniquely 
determined system of conditional measures $(\mu_y)_{y \in Y}$ which 
disintegrates the measure $\mu$, i.e. 
$$
\mu = \int_{Y} \mu_y d\nu(y).
$$
\end{theorem}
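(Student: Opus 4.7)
The plan is to reduce to the finite measure case via $\sigma$-finiteness and then construct the system $(\mu_y)$ using a countable generating algebra together with Radon--Nikodym derivatives. First I would decompose $Z = \bigsqcup_n Z_n$ and $Y = \bigsqcup_n Y_n$ into Borel pieces with $\mu(Z_n), \nu(Y_n) < \infty$; by refining $Z_n$ along $\pi^{-1}(Y_m)$ and using $\mu\circ\pi^{-1} \ll \nu$, it suffices to build the disintegration on each piece and then sum. So I may assume both $\mu$ and $\nu$ are finite.

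Next, since $(Z, \mc C)$ is standard Borel, fix a countable algebra $\mc A_0 \subset \mc C$ generating $\mc C$. For each $A \in \mc A_0$, the set function $D \mapsto \mu(A \cap \pi^{-1}(D))$ is a finite measure on $(Y, \mc D)$ absolutely continuous with respect to $\nu$ (since if $\nu(D) = 0$ then $\mu(\pi^{-1}(D)) = 0$ by hypothesis and $A \cap \pi^{-1}(D) \subset \pi^{-1}(D)$). Let $g_A(y)$ be a version of its Radon--Nikodym derivative with respect to $\nu$. Since $\mc A_0$ is countable, there are countably many identities $g_{A \sqcup B}(y) = g_A(y) + g_B(y)$, $g_{A \setminus B}(y) = g_A(y) - g_B(y)$, $g_Z(y) = \mu(Z)$-bounded, and $g_\emptyset(y) = 0$, each of which holds on a $\nu$-conull set. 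Intersecting these, there is a $\nu$-conull $Y_0 \subset Y$ on which $A \mapsto g_A(y)$ is a bounded, finitely additive, nonnegative set function on $\mc A_0$.

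The main obstacle is twofold: upgrading $g_\cdot(y)$ from a finitely additive set function on $\mc A_0$ to an honest Borel measure $\mu_y$ on $\mc C$, and verifying that $\mu_y$ is concentrated on the fiber $\pi^{-1}(y)$. For the first point I would exploit the standard Borel structure of $(Z, \mc C)$: identify $Z$ with a Polish space and pick a compact class (or basis of closed sets) inside $\mc A_0$; then countable additivity on $\mc A_0$ holds $\nu$-a.e.\ after shrinking $Y_0$ further, and Carath\'eodory extension yields a Borel measure $\mu_y$ on $\mc C$ for each $y \in Y_0$. For concentration, choose a countable family $\{D_k\} \subset \mc D$ separating points of $Y$; apply the defining identity to $A = \pi^{-1}(D_k)$ and $A = \pi^{-1}(D_k^c)$ to deduce that $\mu_y(\pi^{-1}(D_k)) = \mathbf{1}_{D_k}(y)\, \mu_y(Z)$ for $\nu$-a.e.\ $y$, and intersect over $k$ to conclude $\mu_y$ is supported on $\pi^{-1}(y)$.

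Finally, measurability of $y \mapsto \mu_y(B)$ for all $B \in \mc C$ and the identity $\mu(B) = \int_Y \mu_y(B)\, d\nu(y)$ follow by a monotone class argument starting from $\mc A_0$, where both statements hold by construction. Uniqueness is a direct consequence of the countable generating algebra: any two disintegrations $(\mu_y)$ and $(\mu'_y)$ satisfy $\mu_y(A) = \mu'_y(A)$ for $\nu$-a.e.\ $y$ for each $A \in \mc A_0$, and intersecting these countably many conull sets gives agreement of $\mu_y$ and $\mu'_y$ on all of $\mc C$ outside a $\nu$-null set. Reassembling the pieces $Z_n, Y_n$ produces the $\sigma$-finite disintegration claimed.
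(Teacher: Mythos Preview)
The paper does not actually prove this theorem: it is quoted from \cite{Simmons2012} (and, in the background, from Rokhlin \cite{Rohlin1949}) and used as a black box, so there is no ``paper's own proof'' to compare against. Your sketch follows the classical Rokhlin--style construction of disintegrations (reduce to finite measures, build Radon--Nikodym derivatives over a countable generating algebra, upgrade to countable additivity via a compact class, verify fiber concentration and measurability by a monotone class argument, and deduce uniqueness from countability of the algebra), and each step is the standard one; modulo routine bookkeeping this is a correct outline of a proof of the cited result.
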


\begin{remark} \label{rem meas p}
For the edge set $E$ of a stationary measurable Bratteli diagram 
$\B$ and a Borel measure $p$ on $E$, consider the partition of 
$E$ into the sets $r(s^{-1}(x)), x\in X_0$. This partition 
is measurable in the sense of \cite{Rohlin1949}. We can apply
the disintegration theorem cited above. More precisely, the
sets $r^{-1}(x) = \{(x, y) \in E : y \in X_1\}$, where $x\in X_0$,
are vertical sections of the set $E$. Setting $\wh p (\cdot) =
p(s^{-1}(\cdot))$, we have the projection of the measure $p$
onto $X_0$. Denoting by $x \mapsto p(x, \cdot)$ the corresponding 
system of conditional measures, we have 
$$
p= \int_{X_0} p(x, dy)\; d\wh p(x)
$$
where the measure $p(x, dy)$ is supported by the set $r(s^{-1}(x))$.
\end{remark}

We will consider several versions of disintegration theorem 
related to measures on the path space $\mathcal 
X_\B$ of a measurable Bratteli diagram $\B$. We assume that the
measures will satisfy conditions (i)  and (ii) formulated below.
\vskip 0.3cm

(i) For a given measure $m$ on $\mathcal X_\B$ and the set
$\mathcal X_\B(y)$ defined in \eqref{eq_paths from w}, 
the measurable function 
$$
q_m(y) = m(\mathcal X_\B(y)), \ y \in X_0,
$$
takes finite values. 

(ii) The partition $\xi$ defined in \eqref{eq xi} is obviously 
measurable, and the measure $m$ on $\mathcal X_\B$  can 
be disintegrated with respect to $\xi$. Denote by  $\wh m$ the
projection of $m$ onto $E = X/\xi$. Let $p$ be a measure on $E$.  
Assuming that $\wh m \ll p$ and applying Theorem \ref{thm Simmons},
we have 
\be\label{eq:dis}
m = \int_{E} m_{e} \; d p(e),
\ee
where  $e \mapsto m_e$ is the system of conditional measures of
$m$ with respect to $(E, p)$. 
We note that the conditional measure $m_e$ is supported by the set 
$[e]$, $e \in E$.
\vskip 0.3cm

The class of IFS measures is determined by a special form of 
measures $m_e$ defined in \eqref{eq:dis}. We will assume that 
the  measures considered on the path space $\mathcal X_\B$ satisfy
conditions (i) and (ii).

\begin{definition} (\textit{IFS measures and disintegration})
Let $\B$ be a stationary measurable Bratteli diagram, and
let $\{\tau_e, e \in E\}$ be the system of contractive maps 
defined in
subsection \ref{ss MBD-IFS}. Suppose $p$ is a fixed probability 
measure on
the set $E$. A Borel measure $\mu$ is called an \textit{IFS measure}
with respect to $p$ if 
\be\label{eq-IFS for MBD}
\mu = \int_E \mu\circ\tau_e^{-1}\; dp(e).
\ee
\end{definition}

Our goal is to find conditions under which a  
measure $m$ defined on the path space of a stationary measurable
Bratteli diagram is an IFS measure. 

\begin{theorem}\label{thm IFS MBD} Let $p$ be a Borel 
probability measure on
$E$ where $E$ is the edge set of a stationary measurable Bratteli
diagram $\B = (\mathcal X, \E)$. 
Let  $\mu$ be a measure on $\mathcal X_\B$ and $q(x) = 
\mu(\mathcal X_\B(x))$. 
Then $\mu$ is and IFS measure if and only if the following 
condition holds:
\be\label{eq P-MBD}
\int_{X_1} p(x, dy) q(y) = q(x).
\ee 
\end{theorem}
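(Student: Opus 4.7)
My plan is to mirror the proof of the discrete analogue, Theorem \ref{thm IFS stat BD}, with the disintegration $p = \int_{X_0} p(x, dy)\, d\widehat p(x)$ of Remark \ref{rem meas p} playing the role of the summation $\sum_{e : s(e) = w}$ over outgoing edges. By the Observation in Subsection \ref{ss MBD-IFS}, the IFS identity \eqref{eq-IFS for MBD} is an equality of Borel measures on $\mathcal X_\B$ and hence reduces to verifying that both sides agree on every cylinder set. I would first treat the simplest cylinders $s^{-1}(C) = \{\bar x : x_0 \in C\}$, $C \subset X_0$ Borel, to extract condition \eqref{eq P-MBD}, then bootstrap to longer cylinders by induction.

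For the forward direction, recall that $\tau_e$ prepends $s(e)$ as the source coordinate and maps $D_e$ bijectively onto $R_e$, so $\tau_e^{-1}(s^{-1}(C)) = D_e$ whenever $s(e) \in C$ and is empty otherwise. Since $\mu(D_e) = \mu(\mathcal X_\B(r(e))) = q(r(e))$, one obtains
$$
\int_E \mu\circ\tau_e^{-1}(s^{-1}(C))\, dp(e) = \int_E \mathbf{1}_C(s(e))\, q(r(e))\, dp(e) = \int_C \left(\int_{X_1} q(y)\, p(x, dy)\right) d\widehat p(x),
$$
where the last step uses the disintegration of $p$. Setting this equal to $\mu(s^{-1}(C))$ and identifying $q$ with the Radon-Nikodym density of $\mu\circ s^{-1}$ with respect to $\widehat p$, which follows by disintegrating $\mu$ over $s$ via Theorem \ref{thm Simmons}, gives \eqref{eq P-MBD} as an $\widehat p$-a.e.\ identity.

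For the converse, I would assume \eqref{eq P-MBD} and argue by induction on the cylinder length. The base case is exactly the computation above, reversed. For the inductive step, note that for $e = (w, v)$ the set $\tau_e^{-1}([C_0, C_1, \ldots, C_n])$ is supported in $D_e$ and, when $w \in C_0$ and $v \in C_1$, reduces to a shorter cylinder to which the inductive hypothesis applies. Integrating over $e$ with the disintegration of $p$ and applying \eqref{eq P-MBD} at the top level reproduces the length-$n$ cylinder measure. Since such cylinders generate the Borel $\sigma$-algebra on $\mathcal X_\B$, the identity extends to all Borel sets, and one could alternatively invoke Kolmogorov extension, as in the closing step of the proof of Theorem \ref{thm IFS stat BD}.

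The main obstacle is making rigorous sense of $q(x) = \mu(\mathcal X_\B(x))$ in the non-atomic regime, where individual fibers $\mathcal X_\B(x)$ may be $\mu$-null. This forces one to interpret $q$ through the disintegration of $\mu$ by $s$ and to read \eqref{eq P-MBD} as an $\widehat p$-almost-everywhere statement about the density $d(\mu\circ s^{-1})/d\widehat p$. A secondary technical point is the measurability of $x \mapsto \int_{X_1} q(y)\, p(x, dy)$, which is built into the conclusion of Theorem \ref{thm Simmons} via the measurable dependence of the conditional kernel on the base point.
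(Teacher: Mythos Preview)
Your proposal is correct and follows essentially the same route as the paper: disintegrate $p$ over $X_0$ via Remark~\ref{rem meas p}, reduce the IFS identity to cylinder sets, and use \eqref{eq P-MBD} together with Kolmogorov extension to build (or verify) $\mu$ level by level, exactly mirroring Theorem~\ref{thm IFS stat BD}. You are in fact more explicit than the paper on two points---the forward implication and the need to read $q$ as the density $d(\mu\circ s^{-1})/d\widehat p$ rather than a literal fiber measure---both of which the paper passes over silently.
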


\begin{proof}
In the proof of the theorem, we apply the idea used in Theorem 
\ref{thm IFS stat BD}. 

We will construct an IFS measure $\mu$ by defining its values on 
cylinder sets of consequently increasing length. This construction
will be based on the application of \eqref{eq-IFS for MBD} so that
the measure $\mu$ will be an IFS measure automatically. 
Relation \eqref{eq P-MBD} is used to satisfy the Kolmogorov
extension theorem. 

As mentioned in Remark \ref{rem meas p},
the projection of $p$ onto $X_0$ defines the measure $\wh p$.
This allows us to define $\mu$ on cylinder subsets of 
$\mathcal X_\B$ generated by Borel sets on $X_0$:
$$
\mu([C_0]) = \int_{C_0} q(x_0) \; d\wh p(x_0), 
$$
where $C_0$ is a Borel subset of $X_0$.

For a cylinder set $[C_0, C_1] =\{ x= (x_i) \in \mathcal X_\B :
x_0 \in C_0, x_1 \in C_1\}$ of length two, we define
\be\label{eq_cyl set 2}
\mu([C_0, C_1]) = \int_{C_0} \left(\int_{C_1 \cap r(s^{-1}(x_0))}
p(x_0, dy) q(y) \right) \; d\wh p(x_0).
\ee

Show that this definition of the measure $\mu$ satisfies 
\eqref{eq-IFS for MBD}:
$$
\ba
 \int_E \mu\circ \tau_e^{-1} ([C_0, C_1]) \; 
d p(e)
= & \int_E \mu\circ \tau_e^{-1} ([C_0, C_1]\cap [e]) \; d p(e)\\
=& \int_{C_0} \int_{C_1 \cap r(s^{-1}(x_0))} \mu(\mathcal X_\B(y)\;
p(x_0, dy)\; d\wh p(x_0)\\
= & \int_{C_0} \int_{C_1 \cap r(s^{-1}(x_0))} q(y) \;
p(x_0, dy)\; d\wh p(x_0) \\
= & \mu([C_0, C_1])
\ea
$$

In general, if we defined the measure $\mu$ on cylinder sets 
of the form $[C_0, ..., C_k], k = 1,..., n-1,$, then we set
$$
\mu ([C_0, ..., C_n]) = \int_E \mu\circ \tau_e^{-1} 
([C_0, ..., C_{n-1}]) \; dp(e).
$$
This means that this definition of $\mu$ shows that relation
\eqref{eq-IFS for MBD} holds automatically. 

It remains to show that the measure $\mu$ is well defined, that is
it satisfies the Kolmogorov extension theorem. We check this
property for the cylinder sets of length two. Indeed, in this case 
we take $C_1 = X_1$ and compute
$$
\ba
\mu([C_0, X_1]) = & \int_{C_0} \left( \int_{X_1 \cap r(s^{-1}(x_0))}
q(y) p(x_0, dy) \right) d\wh p(y)\\
= & \int_{C_0} \left( \int_{r(s^{-1}(x_0))}
q(y) p(x_0, dy) \right) d\wh p(y)\\
=& \int_{C_0} q(x_0) \; d\wh p(x_0)\\
=& \mu([C_0]).
\ea
$$
The general case is proved analogously. 
\end{proof}

\begin{remark}
We note that the existence of an IFS measure on $\mathcal X_\B$
can be proved by using a fixed point theorem following 
\cite{Hutchinson1981}. We consider the metric $\mathrm{dist}$ on
$\mathcal X_\B$ defined in Remark \ref{rem prop BD}. 
Let 
$$
\mbox{Lip}_1 = \{ f : \mathcal X_\B \to \mathbb R : 
|f(x) - f(y) \leq \ \mbox{dist}(x, y)\}.
$$
For a measure $\nu$ on $\mathcal X_\B$, set 
$$
L(\nu) = \int_E \nu\circ\tau_e^{-1} \; dp(e).
$$
Since $\tau_e$ is a contractive map for every $e$, one can show that
$\rho(L(\nu), L(\mu)) \leq \rho (\nu, \mu)$ where the metric
$\rho$ is defined by 
$$
\rho (\nu, \mu) = \sup \Bigl\{ \big|  \int f \; d\nu - \int f\; 
d\mu\; \big| : f \in \mathrm{Lip}_1\Bigr\}.
$$
Then we conclude that there exists a measure $\mu_0$ such that
$L(\mu_0) = \mu_0$.
\end{remark}

\begin{remark}
The IFS measure $\mu$ defined in Theorem \ref{thm IFS MBD} is 
shift invariant. The proof of this fact is similar to that 
giving in Theorem \ref{thm-IFS-inv}. 
\end{remark}

\textbf{Acknowledgements}.  The authors are pleased to thank our 
colleagues and collaborators, especially, R. Curto, H. Karpel, 
P. Muhly, W. Polyzou, S. Sanadhya. We are 
thankful to the members of the seminars in Mathematical Physics and 
Operator Theory at the University of
Iowa for many helpful conversations. 

\textbf{Declaration}.
The authors declare that they have no conflict of interest.

\bibliographystyle{alpha}
\bibliography{IFS}
\end{document}